\newtheorem{theorem}{Theorem}[section]
\newtheorem{definition}[theorem]{Definition}
\newtheorem{lemma}[theorem]{Lemma}
\newtheorem{proposition}[theorem]{Proposition}
\begin{document}
\abovedisplayskip=6pt plus 1pt minus 1pt \belowdisplayskip=6pt
plus 1pt minus 1pt
\thispagestyle{empty} \vspace*{-1.0truecm} \noindent
\vskip 10mm

\begin{center}{\large A polynomial invariant of virtual links\footnotemark\\[2mm]
\footnotetext{\footnotesize The authors are supported by NSFC (No. 11171025) and Science Foundation for The Youth Scholars of Beijing Normal University}} \end{center}

\vskip 5mm
\begin{center}{Zhiyun Cheng\quad Hongzhu Gao\\
{\small School of Mathematical Sciences, Beijing Normal University
\\Laboratory of Mathematics and Complex Systems, Ministry of
Education, Beijing 100875, China
\\(email: czy@bnu.edu.cn; hzgao@bnu.edu.cn)}}\end{center}

\vskip 1mm

\noindent{\small {\small\bf Abstract} In this paper, we define some polynomial invariants for virtual knots and links. In the first part we use Manturov's parity axioms \cite{Man2010} to obtain a new polynomial invariant of virtual knots. This invariant can be regarded as a generalization of the odd writhe polynomial defined by the first author in \cite{Che2012}. The relation between this new polynomial invariant and the affine index polynomial \cite{Kau2012,Fol2012} is discussed. In the second part we introduce a polynomial invariant for long flat virtual knots. In the third part we define a polynomial invariant for 2-component virtual links. This polynomial invariant can be regarded as a generalization of the linking number.
\ \

\vspace{1mm}\baselineskip 12pt

\noindent{\small\bf Keywords} virtual knot; parity; writhe polynomial; linking polynomial\ \

\noindent{\small\bf MR(2010) Subject Classification} 57M25 57M27\ \ {\rm }}

\vskip 1mm

\vspace{1mm}\baselineskip 12pt

\section{Introduction}
This paper concerns some polynomial invariants of virtual knots and virtual links. Virtual knot theory can be regarded as a generalization of the classical knot theory. With this viewpoint some classical polynomial invariants can be defined similarly on virtual knots, such as the Jones polynomial \cite{Kau1999}. Besides of these classical polynomial invariants, some generalizations, such as the arrow polynomial \cite{Dye2009} and the Miyazawa polynomial \cite{Miy2008} (which are equivalent) were introduced recently.

The idea of parity plays an important role in virtual knot theory. For example, the writhe of a virtual knot diagram is not an invariant. However by coloring each real crossing point odd or even properly, L. Kauffman has shown that the odd writhe \cite{Kau2004} is a simple and enlightening virtual knot invariant. In \cite{Man2010} V. O. Manturov generalised the idea of parity into parity axioms (see Section 3), which told us what kind of parity will be useful. Inspired by the definition of warping polynomial \cite{Shim2010,Shim2011}, the first author defined the odd writhe polynomial in \cite{Che2012}, which can be regarded as a generalization of the odd writhe. The main idea of the odd writhe polynomial is assigning a weight to each crossing, the similar idea was used by A. Henrich to define three kinds of degree one Vassiliev invariants in \cite{Hen2010}. In order to define the odd writhe polynomial first we label each real crossing point $c_i$ with an integer number $N(c_i)$ according to some rules, then we take the sum of $w(c_i)t^{N(c_i)}$ for all odd crossings. Here $w(c_i)$ denotes the writhe $($or the sign$)$ of $c_i$, see the figure below.
\begin{center}
\includegraphics{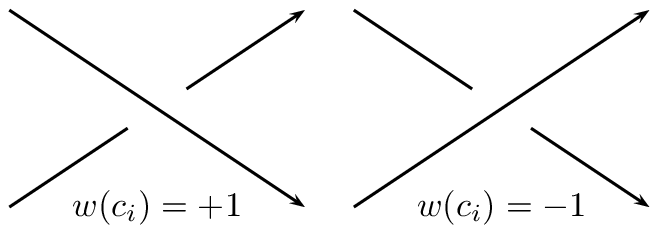}\\
Figure 1
\end{center}

One disadvantage of the odd writhe polynomial is that it vanishes if the virtual knot diagram contains no odd crossings. If all the classical crossing points of a virtual knot diagram are even then we say this diagram is \emph{even}. In order to investigate even knot diagrams, we consider a sequence of parities which were introduced in \cite{Man2010}. With these parities we introduce a sequence of polynomial invariants $f_0(t), f_1(t), f_2(t), \cdots$, then we define the \emph{writhe polynomial} $W_K(t)$ to be
\begin{center}
$W_K(t)=\sum\limits_{i=0}^{\infty}f_i(t)$.
\end{center}
We shall prove that this polynomial is a virtual knot invariant. Similar to the odd writhe polynomial, the writhe polynomial can be used to detect some virtual knots from its inverse and mirror image. In fact with another viewpoint of the writhe polynomial, we can easily find that the writhe polynomial is essentially equivalent to the affine index polynomial introduced by L. Kauffman very recently \cite{Kau2012,Fol2012}. More precisely, the difference between the writhe polynomial and the affine index polynomial (with a shift) is a simple virtual knot invariant.

In the second part of this paper we consider the long flat virtual knots. A long flat virtual knot can be regarded as  a 1-1 tangle. A diagram of long flat virtual knot contains two kinds of crossings, the flat crossing and the virtual crossing. By replacing each flat crossing with an overcrossing or an undercrossing we get a long virtual knot. If a flat long virtual knot is not trivial, it follows that all long virtual knots obtained from this flat long virtual knot are nontrivial. Usually it is difficult to detect whether a long flat virtual knot is trivial or not. Our polynomial invariant of long flat virtual knots may help doing it. For the closure of a long flat virtual knot, i.e. a flat virtual knot, we remark that some polynomial invariants can be defined similarly, see \cite{Hen2010} and \cite{IKL2012}.

In the third part of this paper we introduce a polynomial invariant which can be regarded as a generalization of the linking number. The writhe polynomial and the affine index polynomial both concern virtual knot diagrams. Obviously for a link diagram one can define a family of polynomial invariants by considering each component respectively. But these invariants only reflect the knotted information of each component, the linking information between different components are lost. By investigating the Gauss diagram of a virtual link diagram $K_1\cup K_2$ we introduce two polynomials $F(t)$ and $G(t)$. Although both $F(t)$ and $G(t)$ are not invariants, the product of them is a virtual link invariant. We name it the \emph{linking polynomial}
\begin{center}
$L_{K_1\cup K_2}(t)=F(t)G(t)$
\end{center}
of a given virtual link diagram $K_1\cup K_2$. Some examples of virtual links with trivial linking number but non-trivial linking polynomial will be given. We will show that for classical links the linking polynomial contains the same information as the linking number, this allows us to detect whether a virtual link diagram is virtually linked or not.

Our paper is organized as follows. In section 2 we give a short introduction to virtual knot theory. Some simple invariants, such as the odd writhe invariant and the odd writhe polynomial will be reviewed. In section 3  we present the parity axioms proposed by V. O. Manturov. With a sequence of parities we define the writhe polynomial and show that it is a virtual knot invariant. The relation between the writhe polynomial and Kauffman's affine index polynomial is also discussed. In section 4 we will show how to define the writhe polynomial on a long flat virtual knot. In the last section we construct a polynomial invariant of 2-component virtual links. We name it the linking polynomial, some interesting examples will be given, and the relation between linking polynomial and linking number will be discussed.

\section{Virtual knot theory and the odd writhe polynomial}
In the beginning of this section we will take a short review of virtual knot theory. For more details we recommend \cite{Kau1999} in which virtual knot was proposed for the first time, and \cite{Kau2011} for a good survey.

From the diagrammatic viewpoint, classical knot theory can be described as follows. Given a 4-valent planar graph, replace each vertex by an overcrossing or an undercrossing, we can obtain a link diagram. A pair of link diagrams are equivalent if and only if one diagram can be obtained from the other by a sequence of Reidemeister moves. In virtual knot theory, besides overcrossing and undercrossing we have another crossing type, the virtual crossing. A virtual crossing point is usually represented by a small circle placed around the crossing point. Instead of the classical three Reidemeister moves, we need to generalize the Reidemeister moves as below:
\begin{center}
\includegraphics{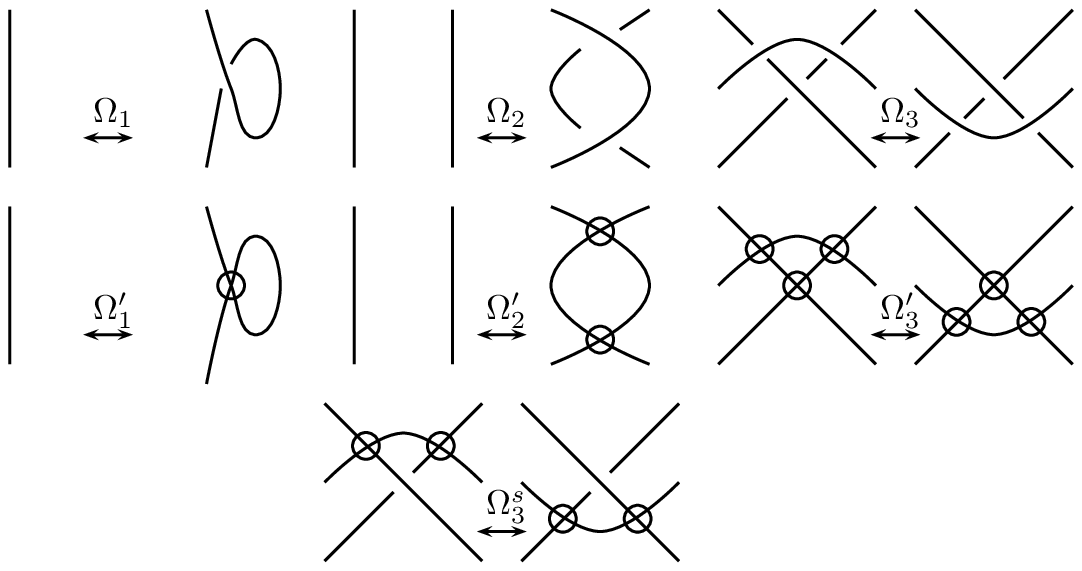}\\
Figure 2
\end{center}
It is easy to find that $\Omega_1, \Omega_2, \Omega_3$ are exactly those classical Reidemeister moves, $\Omega'_1, \Omega'_2, \Omega'_3$ can be regarded as the virtual versions of $\Omega_1, \Omega_2, \Omega_3$. The last one $\Omega_3^s$ is a version of the third Reidemeister move. For simplicity one can summarize $\Omega'_2, \Omega'_3$ and $\Omega_3^s$ by a \emph{detour move}, see the figure below.
\begin{center}
\includegraphics[width=8cm]{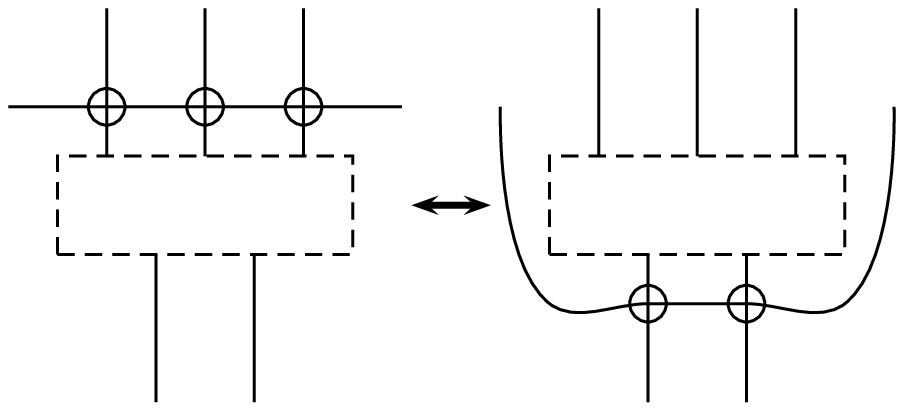}\\
Figure 3
\end{center}
If two virtual link diagrams can be connected by a finite sequence of these generalized Reidemeister moves then we say they are \emph{equivalent}.

There are mainly two motivations for introducing virtual knot theory. The first motivation is the interpretation of virtual links as stably embedded circles in thickened surfaces. The classical knot theory studies the embeddings of $S^1$'s in $S^3$ $($or $S^2\times I)$ up to isotopy. It is a natural question to consider the embeddings of $S^1$'s in $\Sigma_g$, here $\Sigma_g$ denotes the surface with genus $g$. For each virtual link diagram there is a corresponding embedding of $S^1$'s in $\Sigma_h$ by adding some 1-handles on the original $S^2$, here $h$ denotes the number of virtual crossings. We say two such embeddings are \emph{stably equivalent} if one can be obtained from the other by an ambient isotopy in the thickened surface, homeomorphisms of the surfaces, and the addition or subtraction of handles which does not intersect the image of the link. Then we have
\begin{theorem}[\cite{Kau1999,Kau2005}]
Two virtual links are equivalent if and only if their corresponding embeddings are stably equivalent.
\end{theorem}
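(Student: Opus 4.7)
The plan is to prove both directions separately, after first making precise the construction that associates to a virtual diagram $D$ an embedding of circles in a thickened surface $\Sigma_h\times I$: start from a disk neighborhood of $D$ in $S^2\times I$ (treating classical crossings as honest over/undercrossings), and for each virtual crossing attach a small 1-handle to $S^2$ so that the two strands can be routed through the handle disjointly, eliminating the virtual crossing. This gives an embedding in $\Sigma_h\times I$ where $h$ is the number of virtual crossings.

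For the forward direction, I would check move by move that each generalized Reidemeister move corresponds to one of the allowed operations (ambient isotopy in $\Sigma\times I$, surface homeomorphism, or handle addition/subtraction). The classical moves $\Omega_1,\Omega_2,\Omega_3$ occur in a disk, hence are realized by isotopies of the embedded arcs inside $\Sigma\times I$. For the purely virtual moves $\Omega_1',\Omega_2',\Omega_3'$, which take place near 1-handles with no strands underneath, one checks they are realized either by ambient isotopy through a handle or by sliding the handle attaching region, together with possibly adding/removing a handle that does not meet the link (as in $\Omega_1'$). The mixed move $\Omega_3^s$ is the crucial one: a classical strand can be slid past a virtual crossing because the two strands of that virtual crossing traverse separate parts of a 1-handle, leaving room for the third strand to move by ambient isotopy. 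Invariance under the detour move follows directly because rerouting a virtual arc corresponds to isotoping through the (topologically trivial) attached handles.

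For the reverse direction I would fix a link $L\subset\Sigma\times I$, choose a handle decomposition of $\Sigma$ as a 2-sphere with 1-handles attached, and project $L$ to a planar diagram by cutting each 1-handle along a pair of dual arcs: strands running through a handle become pairs of arcs in $S^2$ that cross virtually. Then I must show independence, up to virtual Reidemeister moves, of (a) the isotopy class of the projection (Reidemeister theorem applied locally in $S^2$, supplying the classical moves and, for strands passing through handles, the virtual and mixed moves), (b) the chosen cutting arcs for each handle (handle slides produce $\Omega_2'$ and $\Omega_3'$, together with detour reroutings), (c) the choice of handle decomposition of the same $\Sigma$ (any two are related by handle slides, corresponding again to $\Omega_2',\Omega_3',\Omega_3^s$), and (d) stabilization, i.e.\ adding/removing a handle disjoint from $L$, which corresponds to a free $\Omega_1'$-type move and so does not alter the diagram up to virtual equivalence.

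The main obstacle is the reverse direction, and within it the stabilization step. One has to rule out that adding a handle intersecting $\Sigma\times I$ far from $L$, then later sliding the link through it, could produce diagrams not reachable by the seven listed moves. The cleanest way is to argue that any stable equivalence can be put in a normal form in which all handle additions happen first (giving a common ambient $\Sigma_N\times I$ containing both links), after which one appeals to the parametrized Reidemeister theorem for link diagrams on a fixed surface, showing the two resulting diagrams in $\Sigma_N\times I$ are connected by classical Reidemeister moves there; translating these moves back through the chosen handle cuts produces precisely the generalized Reidemeister moves on the original virtual diagrams. I would rely on the Kuperberg-style uniqueness of the minimal genus representative to ensure that no exotic stabilization phenomena intervene.
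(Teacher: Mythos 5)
The paper does not prove this statement at all: it is quoted from \cite{Kau1999,Kau2005} as background, so there is no in-paper argument to compare against. Judged on its own terms, your forward direction is essentially right — checking the seven generalized moves one at a time against ambient isotopy, handle slides, and handle addition/removal is exactly how that half goes, and organizing the purely virtual moves around the detour move is the correct simplification.

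The reverse direction has two genuine gaps. First, the assertion that ``any stable equivalence can be put in a normal form in which all handle additions happen first'' is not a reduction of the problem — it essentially \emph{is} the problem. You must show that a destabilization followed by a later stabilization can be commuted or absorbed into a common stabilization, and that the surface homeomorphisms interleaved between these operations — which include Dehn twists along curves meeting the diagram and homeomorphisms not isotopic to the identity — change the projected virtual diagram only by generalized Reidemeister moves. Neither claim is addressed, and the second does not follow from a Reidemeister theorem on a fixed surface, which only handles isotopies. The standard way around both difficulties (Kamada--Kamada, Carter--Kamada--Saito) is to pass through \emph{abstract link diagrams}: one records only a regular neighborhood of the underlying $4$-valent graph, so the genus of the ambient closed surface, the choice of handle decomposition, and the mapping class all become irrelevant by construction; your direct approach with handle decompositions of closed surfaces concentrates all the difficulty precisely at the steps you leave as assertions. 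Second, invoking Kuperberg's uniqueness of the minimal-genus representative is backwards: that is a separate and harder theorem, which the paper itself presents as a \emph{consequence} of this correspondence framework rather than an ingredient of it, and nothing in your argument actually extracts a usable statement from it beyond the hope that ``no exotic stabilization phenomena intervene.''
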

From this topological viewpoint some invariants can be defined naturally. For instance the minimal genus of the surface mentioned above is obviously an invariant of virtual links. By studying this invariant it was proved that the connected sum of two non-trivial virtual knots is also non-trivial \cite{Man2008}. In \cite{Kup2003} Kuperberg showed that for a virtual knot the embedding type of the minimal genus surface is unique.

The second motivation of proposing virtual knot theory comes from the realization of Gauss diagrams. Given a classical knot diagram $K$, the \emph{Gauss diagram} of $K$ is an oriented circle where the preimages of each crossing points are indicated. The two preimages of one crossing are connected by an arrow, directed from the preimage of the overcrossing to the preimage of the undercrossing. Then we assign a sign to each chord according to the writhe of the corresponding crossing point. The figure below gives an example of the trefoil knot.
\begin{center}
\includegraphics[width=6cm]{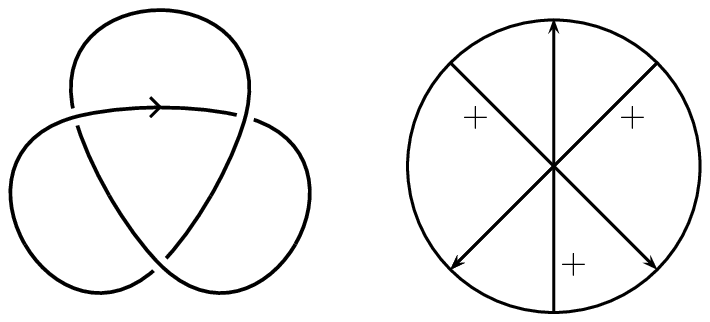}\\
Figure 4
\end{center}

Note that if a Gauss diagram can be realized by a classical knot diagram then the corresponding classical knot diagram is unique. However not every Gauss diagram can be realized by a classical knot diagram. In order to realize all Gauss diagrams one has to add some virtual crossings on the knot diagram. The figure below shows a virtual trefoil knot and its Gauss diagram.
\begin{center}
\includegraphics{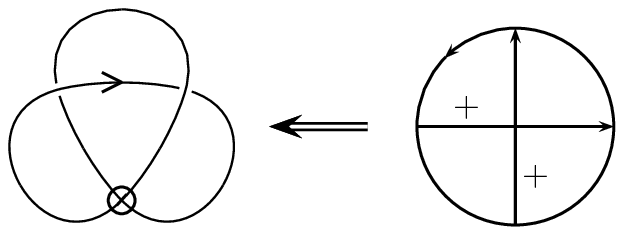}\\
Figure 5
\end{center}
Hence given a Gauss diagram, we can always construct a virtual knot diagram representing it. It is evident that this virtual knot diagram is not unique. But it is not difficult to observe that $\Omega'_1, \Omega'_2, \Omega'_3$ and $\Omega_3^s$ all preserve the Gauss diagram. In fact we have the following theorem.
\begin{theorem}[\cite{Gou2000}]
A Gauss diagram uniquely defines a virtual knot isotopy class.
\end{theorem}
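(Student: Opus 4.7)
The plan is as follows. Given a Gauss diagram $G$ with $n$ arrows, I would first exhibit a canonical virtual realization $D_G$: fix a base point on the oriented Gauss circle, read off the $2n$ arrow endpoints in cyclic order, and place $n$ small rigid crossing tangles in the plane (with over/under strand and sign prescribed by $G$) in a row. Connect consecutive endpoints by arcs routed through a narrow strip above the row, so that two distinct connecting arcs meet only in virtual crossings. The diagram $D_G$ so obtained plainly has $G$ as its Gauss diagram, which settles existence; the real content of the theorem is uniqueness.

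For uniqueness, for any virtual diagram $D$ realizing $G$, I would show $D \sim D_G$. Using a planar isotopy together with local moves inside small neighborhoods of the classical crossings, I can first reposition the $n$ classical tangles of $D$ so that they coincide with those of $D_G$, with matching sign and over/under data; this is possible because the two diagrams carry identical classical crossing data (this is the hypothesis on $G$). The two diagrams now agree on the classical part and differ only in how the $2n$ arcs between consecutive classical endpoints are routed in the plane, and by construction every intersection along such an arc is a virtual crossing. The detour move of Figure 3 rides any such arc along an arbitrary alternative path between its endpoints, introducing virtual crossings only with whatever strands lie in the way; applying it finitely many times reroutes each arc of $D$ to the position of the corresponding arc of $D_G$. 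Any residual virtual curls on a single strand are then removed by $\Omega'_1$. Hence $D \sim D_G$, and applying this to both $D_1$ and $D_2$ yields $D_1 \sim D_G \sim D_2$.

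The main obstacle is justifying the rerouting step rigorously, i.e.\ verifying that two smoothly embedded arcs with the same endpoints (avoiding the fixed classical tangles) can be transformed one into the other by finitely many detour moves. A clean argument is to choose a generic smooth homotopy between the two arcs and analyze its codimension-one singularities: each tangency with another strand contributes an $\Omega'_2$, and each triple point contributes an $\Omega'_3$ or an $\Omega_3^s$, depending on whether the third strand is virtual or classical. This reduction to a standard transversality argument is the crux, and once it is accepted the theorem follows immediately.
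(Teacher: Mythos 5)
The paper does not actually prove this statement---it is quoted from Goussarov--Polyak--Viro \cite{Gou2000}---and your argument is exactly the standard one from that source and from Kauffman's original paper: build a canonical planar realization of $G$, then carry any other realization onto it by detour moves, justified by a generic-homotopy/transversality analysis whose elementary events are $\Omega'_1$, $\Omega'_2$, $\Omega'_3$ and $\Omega_3^s$. The sketch is correct in outline; the one imprecision worth fixing is in the triple-point case, where the relevant dichotomy is not whether the ``third strand'' is virtual or classical but whether the pre-existing crossing between the two stationary strands is virtual or classical---since the moving arc carries only virtual crossings, the classical subcase is precisely $\Omega_3^s$.
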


Now we take a brief review of the odd writhe and the odd writhe polynomial introduced in \cite{Kau2004} and \cite{Che2012} respectively. Given a virtual knot diagram there is an associated Gauss diagram. For an arrow in the Gauss diagram, if there are odd number of vertices on both sides of it, then we name this arrow an \emph{odd} arrow. Otherwise we say it is \emph{even}. The parity of each crossing point on the diagram can be defined to be the parity of the corresponding arrow. We remark that an arrow in a Gauss diagram is odd if and only if there are odd number of arrows which cross it transversely. Following \cite{Kau2004} we use $Odd(K)$ to denote all the odd crossings of $K$. It is easy to find that $Odd(K)=\emptyset$ if $K$ is a classical knot diagram. Then the \emph{odd writhe} of a virtual knot $K$ can be defined as
\begin{center}
$J(K)=\sum\limits_{c_i\in Odd(K)}w(c_i)$,
\end{center}
here $c_i$ and $w(c_i)$ denote a crossing point and its writhe respectively. The odd writhe is a simple invariant of virtual knots. Since classical knots have no odd crossings, if a virtual knot has non-trivial odd writhe then it is non-classical and hence non-trivial.

For a given virtual knot diagram $K$, let us consider its associated Gauss diagram. If $K$ contains $n$ classical crossing points, then there are $n$ arrows in the Gauss diagram and the $2n$ endpoints of these arrows divide the circle into $2n$ arcs. To define the odd writhe polynomial, first we assign an integer to each arc as follows. Choose an arc and a point on it, travel along the circle according to the orientation (without loss of generality we assume the orientation of the circle is anti-clockwise). Then there are two types of arrows, for the first type we will meet the preimage of an undercrossing earlier than the preimage of an overcrossing, otherwise we say the it belongs to the second type. Then we assign the sum of the writhe of the first type crossings to the chosen arc. Repeating this process until each arc has an assigned integer. According to the assignment, the assigned integers of two adjacent arcs are different by one, the figure below lists all the possibilities.
\begin{center}
\includegraphics{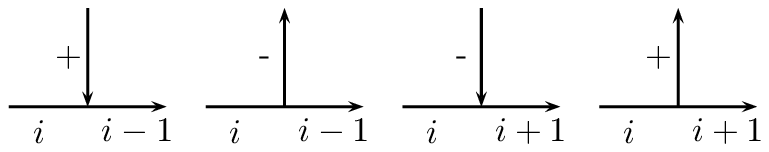}\\
Figure 6
\end{center}

Next we assign an integer to each arrow. Consider an arrow $c_i$ in the Gauss diagram. According to the writhe of $c_i$, the assigned integers on the two sides of each end point of $c_i$ can be described as follows:
\begin{center}
\includegraphics{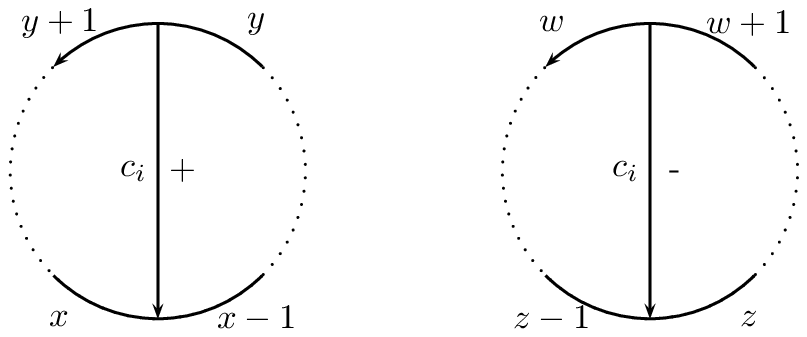}\\
Figure 7
\end{center}
Then we assign an integer $N(c_i)$ to each arrow $c_i$ by
\begin{center}
$N(c_i)=
\begin{cases}
x-y,& \text{if $w(c_i)=+1$;}\\
z-w,& \text{if $w(c_i)=-1$.}
\end{cases}$
\end{center}
Now the \emph{odd writhe polynomial} of $K$ can be defined as
\begin{center}
$f_K(t)=\sum\limits_{c_i\in Odd(K)}w(c_i)t^{N(c_i)}.$
\end{center}

The odd writhe polynomial is a virtual knot invariant and it is evident to see that $J(K)=f_K(\pm1)$ $($here we use the fact that $N(c_i)$ is an even integer if $c_i$ is odd$)$. Here we list some simple properties of the odd writhe polynomial.
\begin{proposition}[\cite{Che2012}]
The odd writhe polynomial satisfies
\begin{enumerate}
  \item $f_{\overline{K}}(t)=f_K(t^{-1})\cdot t^2$, here $\overline{K}$ denotes the inverse of $K$.
  \item $f_{K^*}(t)=-f_K(t^{-1})\cdot t^2$, here $K^*$ denotes the mirror image of $K$.
  \item $f_{K_a\#K_b}(t)=f_{K_a}(t)+f_{K_b}(t)$, here $K_a\#K_b$ denotes one connected sum of $K_a, K_b$ with an arbitrarily chosen of the connection place.
\end{enumerate}
\end{proposition}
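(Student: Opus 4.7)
The plan is to verify each identity at the level of Gauss diagrams by describing how each operation transforms the defining data of the odd writhe polynomial. Since the parity of a chord depends only on whether the number of chords crossing it transversely is odd, it is unaffected by orientation reversal, mirroring, or connected sum; so the summation set $Odd(K)$ transforms predictably in all three cases. The writhes $w(c_i)$ are invariant under inversion (both strands at each crossing are simultaneously reversed) and are negated under mirroring. What requires the most care is the behavior of the integer exponents $N(c_i)$, and this is where the bulk of the work lies.

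For part (1), reversing the orientation of $K$ reverses the orientation of the underlying circle of the Gauss diagram while leaving each chord (its pair of endpoints and its over-to-under direction) untouched. The arc-labeling procedure is orientation-sensitive: at any fixed starting arc, the classifications ``first-type'' and ``second-type'' of the chords swap, and the total writhe $W(K)$ is preserved. I would translate this to say that each arc label $a$ on $K$ is replaced by a linear function of $a$ on $\overline K$, and then perform a case analysis on $w(c_i) = \pm 1$ using Figure 7 to show that the four labels $x, y, z, w$ around each chord permute in a controlled way. The conclusion I expect is that on every odd crossing $c_i$ one has $N_{\overline K}(c_i) = 2 - N_K(c_i)$, whence $f_{\overline K}(t) = \sum w(c_i) t^{2 - N(c_i)} = t^2 f_K(t^{-1})$.

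For part (2), mirroring swaps the roles of over- and under-crossings, which at the Gauss diagram level reverses the direction of every chord and negates every writhe, while preserving parities. A parallel case analysis on the two formulas defining $N$ (one for $w = +1$, one for $w = -1$), using the fact that the over-endpoint becomes the under-endpoint and vice versa, yields $N_{K^*}(c_i) = 2 - N_K(c_i)$; combined with $w_{K^*}(c_i) = -w_K(c_i)$ this gives $f_{K^*}(t) = -t^2 f_K(t^{-1})$. For part (3), the Gauss diagram of $K_a \# K_b$ is obtained by splicing the two circles at a chosen arc from each, and no chord from $K_a$ links any chord from $K_b$. Parities are therefore preserved within each summand and $Odd(K_a \# K_b) = Odd(K_a) \sqcup Odd(K_b)$. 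Moreover, the arc labels on the $K_a$-portion of the spliced diagram agree with the labels in $K_a$ up to a global additive constant coming from the ``incoming'' contribution of $K_b$ (and likewise with the roles of $a$ and $b$ swapped). Because $N(c_i)$ is defined as a \emph{difference} of two arc labels, these global shifts cancel on each chord, so $N$-values and writhes are preserved and termwise additivity $f_{K_a \# K_b}(t) = f_{K_a}(t) + f_{K_b}(t)$ is immediate.

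The main obstacle is the case analysis behind the shift $N \mapsto 2 - N$ in (1) and (2): the exponent is exactly $2$, not $0$ or some other even integer, and verifying this requires carefully identifying how the four labels $x, y, z, w$ around a chord permute under each operation before forming the prescribed differences. Once this is done, parts (1) and (2) drop out of a single line of algebra, and part (3) follows almost formally from the product structure of the spliced Gauss diagram.
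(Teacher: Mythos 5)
Your outline is correct and is essentially the argument the paper intends (the proof is omitted here and deferred to \cite{Che2012}): parities, writhes and the summation set $Odd(K)$ transform exactly as you say, and the whole content of (1) and (2) is the exponent shift $N\mapsto 2-N$, while (3) reduces to the observation that no chord of $K_a$ links a chord of $K_b$ so that the arc labels on each portion change only by a global constant that cancels in the difference defining $N$. The case analysis on $x,y,z,w$ that you flag as the main obstacle can be bypassed entirely via Lemma 3.2, $N(c)=\mathrm{Ind}(c)+1$: orientation reversal swaps the left-to-right and right-to-left crossings of each chord, and mirroring reverses every arrow together with every sign, so in both cases $\mathrm{Ind}(c)\mapsto-\mathrm{Ind}(c)$ and hence $N(c)\mapsto 2-N(c)$ in one line.
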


\section{A generalization of the odd writhe polynomial}
As we have seen in Section 2, the odd writhe polynomial is an invariant of virtual knots which can be used to distinguish some virtual knots from its inverse and mirror image. Unfortunately there is a visible disadvantage to the odd writhe polynomial, i.e. if a virtual knot diagram contains no odd crossings, then the odd writhe polynomial is trivial. We use \emph{even diagram} to denote this kind of virtual knot diagrams. For example all classical knot diagrams are even. Now we will generalize the odd writhe polynomial to the \emph{writhe polynomial}, which need not to be trivial on even diagrams. Later we will discuss the relation between the writhe polynomial and the affine index polynomial defined in \cite{Kau2012} and \cite{Fol2012}.

Before giving the definition of the writhe polynomial, we give a review of the parity axioms introduced in \cite{Man2010}. Note that the parity axioms defined in \cite{Man2010} are valid for several knot theories (for example the free knot theory etc.), but now we are discussing virtual knots hence the statement here is a little different from that in \cite{Man2010}. See \cite{Man2012} for a general definition of parity.
\begin{definition}
Let $K_1$ and $K_2$ be a pair of virtual knot diagrams such that they can be obtained from each other by a single Reidemeister move. We have a rule which assigns each real crossing point the number 0 (even) or 1 (odd). We say the rule satisfies the parity axioms if it satisfies the following conditions.
\begin{enumerate}
  \item If $K_2$ is obtained from $K_1$ by the first Reidemeister move, then the crossing involved in the Reidemeister move is even.
  \item  If $K_2$ is obtained from $K_1$ by the second Reidemeister move, then the two crossing points involved in the Reidemeister move have the same parity.
  \item If $K_2$ is obtained from $K_1$ by the third Reidemeister move, then
\begin{enumerate}
  \item the corresponding crossing have the same parity
  \item in the three crossings $\{a, b, c\}$, the number of odd crossings is 0 or 2.
\end{enumerate}
\begin{center}
\includegraphics{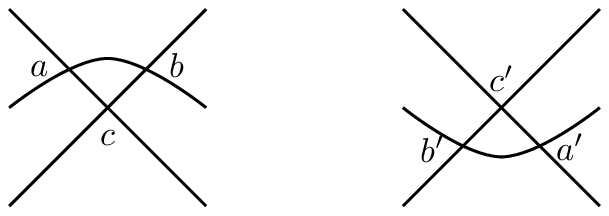}\\
Figure 8
\end{center}
  \item For each Reidemeister move, the parity of a crossing point which is not involved in the Reidemeister move does not change.
\end{enumerate}
\end{definition}

It is easy to observe that the parity introduced in the definition of the odd writhe satisfies the parity axioms above. One can easily conclude that the odd writhe is a virtual knot invariant from these conditions. In order to generalize the odd writhe polynomial to even diagrams, we define an index for each arrow in the Gauss diagram.

Given a Gauss diagram, consider an arrow $c$ of it. Let $r_+$ $(r_-)$ denotes the number of positive (negative) arrows crossing $c$ from left to right, let $l_+$ $(l_-)$ denotes the number of positive (negative) arrows crossing $c$ from right to left. See the figure below.
\begin{center}
\includegraphics{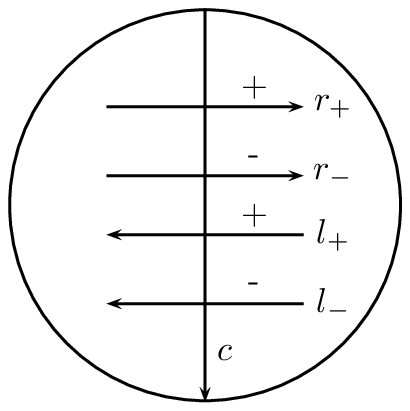}\\
Figure 9
\end{center}
Then the \emph{index} of $c$ $($a similar \emph{intersection index} was introduced by A. Henrich in \cite{Hen2010}$)$ can be defined as
\begin{center}
Ind$(c)=r_+-r_--l_++l_-$.
\end{center}
Given an arrow $c$ in a Gauss diagram, the relation between Ind$(c)$ and $N(c)$ is very simple.
\begin{lemma}
$N(c_i)=$Ind $(c_i)+1$.
\end{lemma}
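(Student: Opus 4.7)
The plan is to perform both computations directly on the Gauss diagram, using the arc labels from Figure~6 as the common intermediate quantity. First I will reinterpret these labels: the integer assigned to an arc equals $\sum w(c')$ over arrows $c'$ whose undercrossing endpoint is met before the overcrossing endpoint while travelling anticlockwise from any base point on the arc. Equivalently, $c'$ contributes $w(c')$ to an arc's label if and only if the arc lies on the anticlockwise sub-arc from $t_{c'}$ (the tail/over endpoint) to $h_{c'}$ (the head/under endpoint). From this description one reads off the elementary transition rule at a single endpoint: moving the base point anticlockwise across a tail shifts the label by $+w(c')$, and across a head by $-w(c')$. Applied to $c_i$ itself, this reproduces exactly the $\pm 1$ jumps drawn around the four arcs adjacent to $t_i$ and $h_i$ in Figure~7.

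The heart of the argument is the following calculation. Let $B$ be the arc immediately after $t_i$ and $C$ the arc immediately before $h_i$, so that $B$ and $C$ sit on the same side of the chord $c_i$. Travelling anticlockwise from $B$ to $C$, the base point crosses exactly those arrow endpoints that lie strictly between $t_i$ and $h_i$ on the circle. An arrow with both of its endpoints on that side of $c_i$ contributes $+w(c')$ at its tail and $-w(c')$ at its head and therefore cancels out, leaving only arrows that actually cross $c_i$. A left-to-right arrow across $c_i$ presents its head in this region and contributes $-w(c')$, while a right-to-left arrow presents its tail and contributes $+w(c')$. Summing gives
\begin{equation*}
\mathrm{label}(B)-\mathrm{label}(C)\;=\;r_{+}-r_{-}-l_{+}+l_{-}\;=\;\mathrm{Ind}(c_i).
\end{equation*}

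To conclude, I will read $N(c_i)$ off Figure~7 in each of the two cases $w(c_i)=\pm 1$ and compare with $\mathrm{label}(B)-\mathrm{label}(C)$. In either case the pair of arcs whose difference defines $N(c_i)$ differs from $(B,C)$ by exactly one self-shift at $t_i$ or $h_i$, signed so as to contribute $+1$ (rather than $-1$), and this single extra unit is precisely the reason the index is off by one from the quantity $N$. Combined with the main computation this yields $N(c_i)=\mathrm{Ind}(c_i)+1$. I expect this last sign-matching to be the only genuine obstacle, because the diagrammatic conventions of Figure~7 differ between positive and negative crossings, so the two cases must be verified separately to confirm that the self-shift is always picked up with the same sign.
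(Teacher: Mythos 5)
Your argument is correct and is essentially the paper's own proof in expanded form: both tally the contribution of each arrow crossing $c_i$ (giving $r_+-r_--l_++l_-$) plus the $+1$ coming from $c_i$'s own endpoints, with your telescoping of arc labels from $B$ to $C$ making precise the paper's one-line ``each crossing arrow contributes $\pm 1$ to $N(c)$'' claim. The only thing left implicit is the final two-case check against Figure~7, which you correctly identify and which is a routine verification of the same kind the paper itself leaves to the reader.
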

\begin{proof}
According to Figure 9, it is obvious that each arrow crossing $c$ from left to right with sign $\pm1$ will contribute $\pm1$ to $N(c)$, and each arrow crossing $c$ from right to left with sign $\pm1$ will contribute $\mp1$ to $N(c)$. Besides, the arrow $c$ itself will contribute 1 to $N(c)$. Therefore the result follows.
\end{proof}

We remark that the parity proposed in the definition of the odd writhe can be restated as below
\begin{center}
$c$ is $\begin{cases}
\text{odd}, & \text{if \; Ind}(c)=1 \quad \text{mod 2}\\
\text{even}, & \text{if \; Ind}(c)=0 \quad \text{mod 2}
\end{cases}$
\end{center}

Now let us consider an even knot diagram, i.e. each crossing point is even. With the parity above there is no odd arrows in the Gauss diagram, hence no information can be obtained. For this reason, we choose another parity on each arrows, which was also given in \cite{Man2010}.
\begin{center}
$c$ is $\begin{cases}
\text{odd}, & \text{if \; Ind}(c)=2 \quad \text{mod 4}\\
\text{even}, & \text{if \; Ind}(c)=0 \quad \text{mod 4}
\end{cases}$
\end{center}
Similar to the odd writhe polynomial, we can define a polynomial $f_1(t)$ with this new parity.
\begin{center}
$f_1(t)=\sum\limits_{c_i\in Odd_1(K)}w(c_i)t^{N(c_i)}.$
\end{center}
Here $Odd_1(K)$ denotes the set of all the odd arrows with the new parity defined above, $w(c_i)$ and $N(c_i)$ are the same as before.

Obviously if each crossing $c_i$ of a virtual knot diagram satisfies
\begin{center}
Ind$(c_i)=0$ \quad mod 4,
\end{center}
then $f_1(t)=0$. In general we can define a sequence of parities as follows \cite{Man2010}.
\begin{center}
$c$ is $\begin{cases}
\text{odd}, & \text{if \; Ind}(c)=2^k \quad \text{mod } 2^{k+1}\\
\text{even}, & \text{if \; Ind}(c)=0 \quad \text{\;\,mod } 2^{k+1}
\end{cases}$
\end{center}
Let $Odd_k(K)$ denotes the set of the odd crossing points of $K$ according to this parity. It is evident that for a given virtual knot diagram there exists some integer $N\geq 0$ such that $Odd_k(K)=\emptyset$ if $k>N$. This is because the number of crossing points is finite. Now we can define a sequence of polynomials as below
\begin{center}
$f_k(t)=\sum\limits_{c_i\in Odd_k(K)}w(c_i)t^{N(c_i)}.$
\end{center}
Note that $f_0(t)$ is exactly the odd writhe polynomial. Now we can define the \emph{writhe polynomial} of a virtual knot $K$ to be
\begin{center}
$W_K(t)=\sum\limits_{k=0}^\infty f_k(t)$.
\end{center}
As we have mentioned above there are only finitely many nonempty sets $Odd_k(K)$, hence the writhe polynomial is actually a finite sum. For the writhe polynomial we have the following theorem.
\begin{theorem}
For each $k\geq 0$, the polynomial $f_k(t)$ is a virtual knot invariant. As a corollary the writhe polynomial is also a virtual knot invariant.
\end{theorem}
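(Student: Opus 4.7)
The plan is to verify invariance of $f_k(t)$ under each of the generalized Reidemeister moves shown in Figure 2 and then sum over $k$ for the corollary. The moves $\Omega_1', \Omega_2', \Omega_3'$ and $\Omega_3^s$ are all absorbed by the detour move, which by construction leaves the Gauss diagram (arrows, signs, and indices) entirely unchanged; since $Odd_k(K)$, the writhes $w(c_i)$, and the integers $N(c_i)$ are all read directly off the Gauss diagram, such moves preserve every $f_k(t)$ automatically. Hence the real task is the three classical moves $\Omega_1, \Omega_2, \Omega_3$.

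For each classical move I would establish two things by inspection of the local Gauss diagram: first, that $\mathrm{Ind}(d)$ is unchanged for every arrow $d$ not involved in the move, so by Lemma 3.2 every such $d$ keeps its $k$-parity, writhe, and contribution $w(d)t^{N(d)}$; second, that the arrows created or destroyed by the move collectively contribute $0$ to $f_k(t)$. For $\Omega_1$, the new kink arrow has its two endpoints adjacent on the oriented circle, so no other arrow crosses it and $\mathrm{Ind}=0$; since $0\not\equiv 2^k\pmod{2^{k+1}}$ for every $k\geq 0$, this arrow is even in every $k$-parity and never enters any $f_k(t)$. For $\Omega_2$, the two new arrows' endpoints are clustered in pairs with no other arrow endpoint between the members of each pair, so every external arrow either crosses both of them in the same sense or crosses neither; hence they share a common $\mathrm{Ind}$, and combined with their opposite writhes this produces contributions $+t^{N}$ and $-t^{N}$ that cancel when the arrows are odd and are jointly absent when they are even.

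For $\Omega_3$, I would pair each of the three pre-move arrows with its counterpart after the move, namely the arrow recording the crossing of the same two strands with the same over/under data (and therefore the same sign). A case analysis on the three strand orientations and on the three crossing signs then shows that corresponding pairs have equal $\mathrm{Ind}$, so the three contributions on each side of the move agree term by term. This case analysis is the main obstacle: one must verify not only that the three R3 arrows' mutual crossings in the Gauss diagram reshuffle without changing individual indices, but also that every external arrow which happens to cross the R3 region contributes the same counts of $r_{\pm}, l_{\pm}$ to each of the three indices before and after the local rearrangement.

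Combining the detour-move case with the three classical cases shows that $f_k(t)$ is invariant for every $k\geq 0$. The corollary that $W_K(t)=\sum_{k\geq 0}f_k(t)$ is a virtual knot invariant is then immediate, because for any fixed diagram only finitely many $f_k(t)$ are nonzero.
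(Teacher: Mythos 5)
Your proposal is correct and its core mechanism is the same as the paper's: read everything off the Gauss diagram, note that the $\Omega_1$ arrow has index $0$ (hence is even for every $k$-parity), that the two $\Omega_2$ arrows share an index and $N$-value but have opposite writhes so their contributions cancel or are jointly absent, and that under $\Omega_3$ the three arrows and all external arrows keep their indices, with Lemma 3.2 converting preserved indices into preserved exponents $N(c_i)$. The one genuine organizational difference is how the set of moves to be checked is cut down. You propose a full case analysis over all oriented variants of $\Omega_1,\Omega_2,\Omega_3$ (three strand orientations and three signs for R3), and you rightly flag this as the main burden; the paper instead invokes Polyak's result that $\{\Omega_{1a},\Omega_{1b},\Omega_{2a},\Omega_{3a}\}$ generates all oriented Reidemeister moves, so only these four specific oriented moves (plus, for $\Omega_{3a}$, the two ways the strands can be connected outside the local picture) need to be examined. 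Your route is more elementary and self-contained but substantially longer if actually executed; the paper's buys a much shorter verification at the cost of citing the generating-set theorem. Your explicit remark that the purely virtual and mixed moves are absorbed by the detour move and leave the Gauss diagram untouched is a point the paper leaves implicit (it is really Theorem 2.2), and is worth stating. Note also that the paper handles $k=0$ by citing the earlier odd writhe polynomial paper, whereas your argument covers $k=0$ uniformly with the other parities, which is cleaner.
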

\begin{proof}
If $k=0$, the invariance of the odd writhe polynomial has been proved in \cite{Che2012}. If $k>0$, the result mainly follows from the fact that each corresponding parity satisfies the parity axioms.

In order to show the invariance of $f_k(t)$, it is sufficient to prove that $f_k(t)$ is invariant under Reidemeister moves. According to \cite{Pol2010}, the following Reidemeister moves $\{\Omega_{1a}, \Omega_{1b}, \Omega_{2a}, \Omega_{3a}\}$ is a generating set of all oriented Reidemeister moves. Hence it suffices to show that $f_k(t)$ is kept under these four kinds of Reidemeister moves.
\begin{center}
\includegraphics{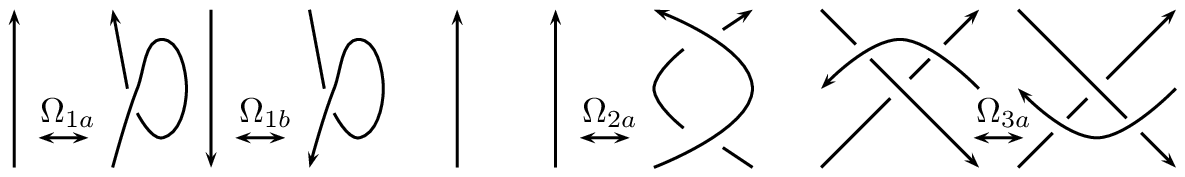}\\
Figure 10
\end{center}

\begin{enumerate}
\item The invariance under $\Omega_{1a}$ and $\Omega_{1b}$.
\begin{center}
\includegraphics{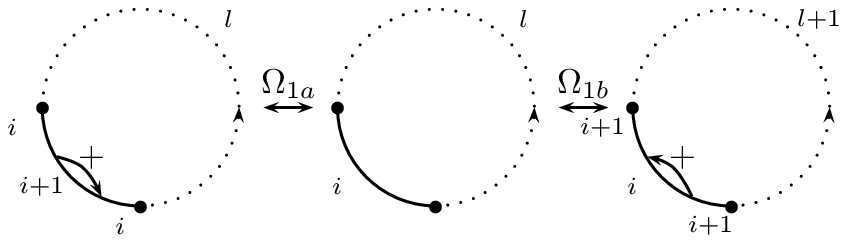}\\
Figure 11
\end{center}
The figure above indicates the corresponding move of $\Omega_{1a}$ and $\Omega_{1b}$ on the Gauss diagram. Consider the $\Omega_{1a}$-move, notice that the assigned number and indices of all other arrows are invariant. And the isolated arrow has index zero, hence the polynomial $f_k(t)$ is kept under $\Omega_{1a}$. For $\Omega_{1b}$, the assigned number on all other arcs are increased by one, hence by definition the assigned number and indices of arrows that are not involved in $\Omega_{1b}$ are also invariant. Since the index of the isolated arrow is zero, it has no contribution to $f_k(t)$. It means that $f_k(t)$ is not changed under $\Omega_{1b}$.
\item The invariance under $\Omega_{2a}$.
\begin{center}
\includegraphics{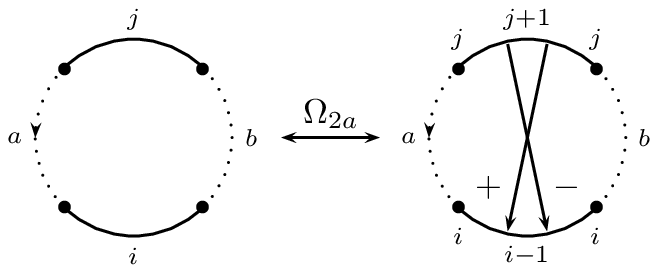}\\
Figure 12
\end{center}
Let us consider the behavior of $\Omega_{2a}$ on the Gauss diagram. Let $c_1$ and $c_2$ denote the pair of arrows shown above. From the figure it is easy to find that the assigned number of all other arrows are invariant. Since $c_1$ and $c_2$ have different signs, it follows that the indices of other arrows are also preserved. Now let us consider $c_1$ and $c_2$, according to the definition of the index it is not difficult to observe that they have the same indices. Besides we also have $N(c_1)=N(c_2)=i-j$. However the signs of them are opposite, hence the contribution of them to $f_k(t)$ is $t^{i-j}-t^{i-j}$ if Ind$(c_1)=$ Ind$(c_2)=2^k$ mod $2^{k+1}$, and 0 otherwise. In conclusion we find that $f_k(t)$ is invariant under $\Omega_{2a}$.

\item The invariance under $\Omega_{3a}$.
\begin{center}
\includegraphics{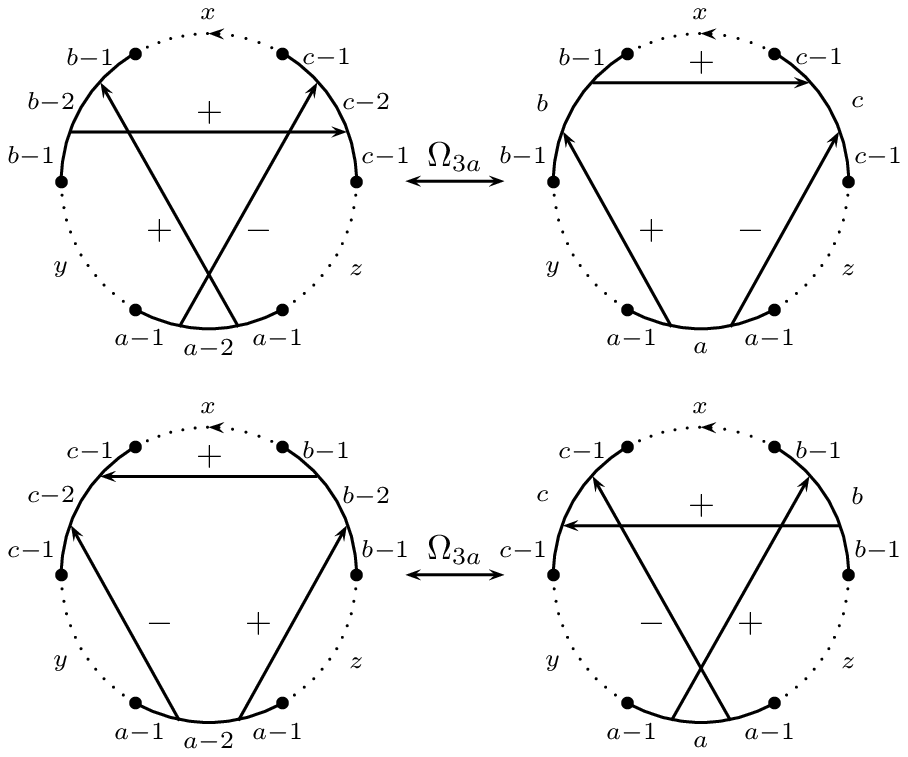}\\
Figure 13
\end{center}
Now let us investigate the behavior of $\Omega_{3a}$ on the Gauss diagram. According to the different connecting methods outside the local diagram of $\Omega_{3a}$, there are two kinds of corresponding Gauss diagrams as above. As before if an arrow $c_i$ is not involved in the Reidemeister move, then Ind$(c_i)$ and $N(c_i)$ are both preserved. On the other hand, for each case, the three arrows appear in the figure above also have invariant indices and assigned number. As a result, for any arrow $c_j$ in the Gauss diagram, Ind$(c_j)$ and $N(c_j)$ are both invariant under $\Omega_{3a}$.
\end{enumerate}

In conclusion for each $k\geq0$, the polynomial $f_k(t)$ is unchanged under all Reidemeister moves. Therefore it is an invariant of virtual knots.
\end{proof}

\textbf{Remark} We have mentioned that if $K$ is a classical knot diagram then each crossing point of $K$ is even. In fact it is not difficult to conclude that the index of each crossing point is 0 if the knot diagram has no virtual crossings. Hence if $K$ is a classical knot diagram then we have $f_k(t)=0$, which implies that the writhe polynomial $W_K(t)=0.$ Therefore if a virtual knot has non-trivial writhe polynomial we can conclude that it is non-classical and hence non-trivial.

Here we list some simple properties of the writhe polynomial, which can be compared with Proposition 2.3 and the related results in \cite{Kau2012} and \cite{Fol2012}. The proof is almost the same as that of Proposition 2.3, hence it is omitted here.
\begin{proposition}
The writhe polynomial satisfies the following properties
\begin{enumerate}
\item $W_{\overline{K}}(t)=W_K(t^{-1})\cdot t^2$, here $\overline{K}$ denotes the inverse of $K$.
\item $W_{K^*}(t)=-W_K(t^{-1})\cdot t^2$, here $K^*$ denotes the mirror image of $K$.
\item $W_{K_a\#K_b}(t)=W_{K_a}(t)+W_{K_b}(t)$, here $K_a\#K_b$ denotes any connected sum of $K_a$ and $K_b$.
\end{enumerate}
\end{proposition}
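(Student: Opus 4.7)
The plan is to reduce each property to a statement about how the triple $(w(c), N(c), k)$ attached to an arrow $c$ of the Gauss diagram transforms under the three operations. Since $W_K(t) = \sum_k f_k(t)$ and each $f_k(t) = \sum_{c \in Odd_k(K)} w(c) t^{N(c)}$, it suffices to track three pieces of data per arrow: its sign $w(c)$, its index $\mathrm{Ind}(c)$ (which determines $N(c) = \mathrm{Ind}(c) + 1$ by Lemma 3.2), and its parity class, i.e.\ the unique $k$ with $c \in Odd_k$. The key numerical observation underlying (1) and (2) is that $-2^k \equiv 2^k \pmod{2^{k+1}}$, so the parity class (hence membership in $Odd_k$) is preserved whenever the index is merely negated.

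For the inverse $\overline{K}$, reversing the orientation of the knot reverses the orientation of the Gauss circle while preserving both the direction (over-to-under) and the sign of every arrow. For a fixed arrow $c$, swapping the circle's orientation exchanges its ``left'' and ``right'' sides, so the counts in $\mathrm{Ind}(c) = r_+ - r_- - l_+ + l_-$ swap according to $r_\pm^{\mathrm{new}} = l_\pm^{\mathrm{old}}$ and $l_\pm^{\mathrm{new}} = r_\pm^{\mathrm{old}}$, yielding $\mathrm{Ind}_{\overline{K}}(c) = -\mathrm{Ind}_K(c)$ and hence $N_{\overline{K}}(c) = 2 - N_K(c)$. Combined with $w_{\overline{K}}(c) = w_K(c)$ and the preservation of the parity class, this gives $f_k^{\overline{K}}(t) = t^2 f_k^K(t^{-1})$, proving (1) after summing over $k$. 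For the mirror $K^*$, switching every over/under crossing simultaneously reverses each arrow's direction and flips each sign; the two direction reversals (of $c$ and of every crossing arrow $d$) cancel in the left-to-right/right-to-left classification, but the sign flips swap $r_\pm$ with $r_\mp$ and $l_\pm$ with $l_\mp$, again producing $\mathrm{Ind}_{K^*}(c) = -\mathrm{Ind}_K(c)$. Since also $w_{K^*}(c) = -w_K(c)$, one obtains $f_k^{K^*}(t) = -t^2 f_k^K(t^{-1})$, giving (2).

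For the connected sum, the Gauss diagram of $K_a \# K_b$ is the concatenation of the two summand Gauss diagrams: the circle splits into two arcs, each carrying only the chords of one summand, and no chord from $K_a$'s arc can cross any chord from $K_b$'s arc. Hence for an arrow $c$ in the $K_a$ portion, its sign and its counts $r_\pm, l_\pm$ are computed entirely from arrows in the $K_a$ portion, so $w(c)$, $\mathrm{Ind}(c)$, and its parity class agree with the values computed in $K_a$ alone, and symmetrically for $K_b$. Therefore $Odd_k(K_a \# K_b) = Odd_k(K_a) \sqcup Odd_k(K_b)$ and each $f_k$ is additive; summing over $k$ yields (3). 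The main subtlety I expect is in the mirror case, where one must carefully separate the effect of reversing all arrow directions (which preserves the left/right classification of each crossing) from the effect of flipping all signs (which actually swaps $r_+$ with $r_-$ and $l_+$ with $l_-$); conflating the two would lead to a missing sign and spoil property (2).
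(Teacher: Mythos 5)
Your proof is correct, and it follows exactly the route the paper intends: the paper omits the argument as "almost the same as that of Proposition 2.3," and your Gauss-diagram bookkeeping of $(w(c),\mathrm{Ind}(c))$ under inversion, mirroring, and concatenation is precisely that argument, with the one genuinely new point for $W_K$ — that negating $\mathrm{Ind}(c)$ preserves its $2$-adic valuation and hence membership in $Odd_k$ — correctly identified and justified. The handling of the mirror case (direction reversals of $c$ and $d$ cancelling, sign flips swapping $r_\pm$ and $l_\pm$) is also right.
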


Very recently, Louis H. Kauffman introduced a polynomial invariant, the affine index polynomial in \cite{Kau2012}. Another definition of this polynomial using virtual linking number was given in \cite{Fol2012}. Next we will discuss the relation between the writhe polynomial and the affine index polynomial. In order to illuminate the relation between them, we give an alternate definition of the writhe polynomial.

Recall the definition of the writhe polynomial $W_K(t)=\sum\limits_{k=0}^\infty f_k(t)$, where $f_k(t)=\sum\limits_{c_i\in Odd_k(K)}w(c_i)t^{N(c_i)}$. According to the definition of $Odd_k(K)$ we conclude that if an arrow has nonzero index, then it has contribution to some $f_k(K)$, hence to the writhe polynomial. With this viewpoint we can rewrite the writhe polynomial to be
\begin{center}
$W_K(t)=\sum\limits_{\text{Ind}(c_i)\neq0}w(c_i)t^{N(c_i)}=\sum\limits_{c_i}w(c_i)t^{N(c_i)}-\sum\limits_{\text{Ind}(c_i)=0}w(c_i)t^{N(c_i)}$.
\end{center}
According to Lemma 3.2 we have
\begin{center}
$W_K(t)=\sum\limits_{\text{Ind}(c_i)\neq0}w(c_i)t^{\text{Ind}(c_i)+1}=\sum\limits_{c_i}w(c_i)t^{\text{Ind}(c_i)+1}-\sum\limits_{\text{Ind}(c_i)=0}w(c_i)t^{\text{Ind}(c_i)+1}$.
\end{center}

Recall the affine index polynomial introduced in \cite{Kau2012}. Given a virtual knot diagram, by an \emph{arc} we mean an edge from one classical crossing to the next classical crossing according to the orientation. Hence an arc may contain some virtual crossings. Next we want to label each arc with an integer, such that the colorings around a crossing point is indicated as below:
\begin{center}
\includegraphics{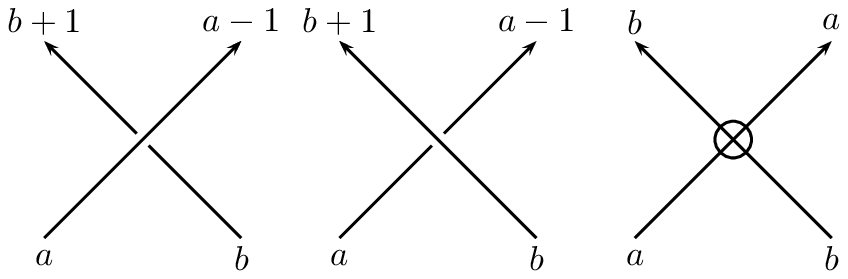}\\
Figure 14
\end{center}
This kind of integer labeling always exists for a virtual knot diagram (including classical knot diagrams) \cite{Kau2012}. Later we will show that this labeling can also be done on classical link diagrams, but not all virtual link diagrams. For example the virtual Hopf link can not be colored in this way. A sufficient and necessary condition of when a virtual link can be colored as above will be given in Section 5.
\begin{center}
\includegraphics{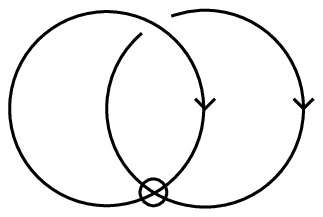}\\
Figure 15
\end{center}
After labeling each arc of a virtual knot diagram, the \emph{weight} of a classical crossing $c$ can be defined as
\begin{center}
$W(c)=\begin{cases}
a-b-1,& \text{if $w(c)=+1$;}\\
b+1-a,& \text{if $w(c)=-1$.}
\end{cases}$
\end{center}
Then for a virtual knot diagram $K$, the \emph{affine index polynomial} of $K$ can be defined as follows
\begin{center}
$P_K(t)=\sum\limits_{c_i}w(c_i)(t^{W(c_i)}-1)=\sum\limits_{c_i}w(c_i)t^{W(c_i)}-wr(K)$,
\end{center}
here $wr(K)=\sum\limits_{c_i}w(c_i)$ denotes the writhe of the diagram. Note that the integer labeling is not unique in general, one labeling will be differ from another by a constant integer. However this affine index polynomial is well defined and is an invariant of virtual knots \cite{Hen2010, Kau2012}.

Now we want to investigate the relation between the writhe polynomial and the affine index polynomial. First let us introduce a simple integer for a given virtual knot diagram $K$, we define a number $Q_K$ by the equation below
\begin{center}
$Q_K=\sum\limits_{\text{Ind}(c_i)\neq0}w(c_i)$.
\end{center}
\begin{lemma}
$Q_K$ is a virtual knot invariant.
\end{lemma}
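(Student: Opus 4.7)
The plan is to observe that $Q_K$ appears as a specialization of the writhe polynomial, which has already been shown to be a virtual knot invariant in Theorem 3.3. Indeed, from the alternate expression
\[
W_K(t)=\sum_{\text{Ind}(c_i)\neq 0} w(c_i)\, t^{\text{Ind}(c_i)+1}
\]
derived just before this lemma, setting $t=1$ immediately yields $W_K(1) = \sum_{\text{Ind}(c_i)\neq 0} w(c_i) = Q_K$. Since $W_K(t)$ is a virtual knot invariant, so is any of its evaluations, and in particular $Q_K$.

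If instead one preferred a direct verification, the argument would run parallel to the proof of Theorem 3.3, checking invariance under the four generating oriented Reidemeister moves $\{\Omega_{1a}, \Omega_{1b}, \Omega_{2a}, \Omega_{3a}\}$. For $\Omega_{1a}$ and $\Omega_{1b}$, the newly introduced arrow in the Gauss diagram is isolated and has index $0$, so it contributes nothing to $Q_K$; and the indices and signs of all other arrows are unaffected. For $\Omega_{2a}$, the two arrows introduced have equal indices but opposite signs, so their contributions to $Q_K$ cancel (whether their common index is zero or not), while other arrows are unchanged. For $\Omega_{3a}$, the proof of Theorem 3.3 already shows that both $\text{Ind}(c_j)$ and $w(c_j)$ are preserved for every arrow $c_j$, and hence so is $Q_K$.

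There is no real obstacle here: the work has essentially been done in establishing the invariance of $W_K(t)$, and the only subtlety worth flagging is that one should use the reformulation of $W_K(t)$ in terms of $\text{Ind}(c_i)$ (via Lemma 3.2) rather than the original parity-based definition, since it is this reformulation that makes the specialization $t=1$ immediately yield $Q_K$.
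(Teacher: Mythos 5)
Your primary argument is correct and takes a different (and cleaner) route than the paper. The paper proves this lemma by directly re-examining the Reidemeister moves via Figures 11--13 and observing that $Q_K$ is preserved in each case. You instead note that the rewriting $W_K(t)=\sum_{\mathrm{Ind}(c_i)\neq 0}w(c_i)t^{\mathrm{Ind}(c_i)+1}$, already established before the lemma, specializes at $t=1$ to $Q_K=W_K(1)$, so invariance of $Q_K$ is inherited from Theorem 3.3 with no further case analysis. There is no circularity, since Theorem 3.3 is proved independently of $Q_K$ and the lemma is only used afterwards (in Theorem 3.6); your derivation also makes transparent that $Q_K$ carries no information beyond $W_K(t)$. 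Your fallback direct verification coincides with the paper's proof, and your handling of the $\Omega_{2a}$ case (cancellation of the two opposite-sign arrows whether or not their common index vanishes) is the one point where the restriction to $\mathrm{Ind}\neq 0$ in the definition of $Q_K$ requires a word, which you supply. Either route is acceptable; the specialization argument buys brevity, while the paper's direct check is self-contained at the cost of repeating the figure analysis.
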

\begin{proof}
It suffices to show that $Q_K$ is invariant under Reidemeister moves. According to Figure 11, Figure 12, Figure 13 it is evident that $Q_K$ is preserved in each case. The proof is finished.
\end{proof}

With the help of $Q_K$, the relation between the writhe polynomial and the affine index polynomial can be described simply as follows.
\begin{theorem}
Given a virtual knot diagram $K$, we have $W_K(t)=(P_K(t)+Q_K)t$.
\end{theorem}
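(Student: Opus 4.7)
The plan is to reduce the identity to a single local claim, namely that the arc-label weight $W(c_i)$ coming from the affine index polynomial coincides with the Gauss-diagram index $\mathrm{Ind}(c_i)$ at every classical crossing, and then perform a short algebraic rearrangement using Lemma 3.2.

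First I would prove the key identity $W(c_i) = \mathrm{Ind}(c_i)$. The quantity $W(c_i)$ is read off Figure 14 from the labels of the arcs meeting at $c_i$, whereas $\mathrm{Ind}(c_i)$ is defined in Figure 9 as $r_+ - r_- - l_+ + l_-$, a signed count of arrows crossing $c_i$ in the Gauss diagram. The affine labeling rule of Figure 14 changes an arc label only at classical crossings, and the change is $\pm 1$ governed by the sign of the crossing one is passing under. Consequently, for any arc of the diagram the label equals a global constant plus a signed count of all arrows of the Gauss diagram that have been crossed transversely by the understrands traversed so far. Applying this to the two arcs appearing in the formula for $W(c_i)$, their difference collects precisely the arrows that cross the chord $c_i$ transversely in the Gauss diagram, with signs determined by their orientation relative to $c_i$. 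A direct case check for $w(c_i)=+1$ and $w(c_i)=-1$ then shows that the explicit $\pm 1$ appearing in the definition of $W(c_i)$ exactly cancels the self-contribution of $c_i$, so that what remains is $r_+ - r_- - l_+ + l_- = \mathrm{Ind}(c_i)$.

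Granted this lemma, the theorem follows in a few lines. Using Lemma 3.2 and the alternate form of the writhe polynomial displayed just above the statement,
\[ W_K(t) \;=\; \sum_{c_i} w(c_i)\,t^{\mathrm{Ind}(c_i)+1} \;-\; \sum_{\mathrm{Ind}(c_i)=0} w(c_i)\,t. \]
Since $Q_K = \sum_{\mathrm{Ind}(c_i)\neq 0} w(c_i) = wr(K) - \sum_{\mathrm{Ind}(c_i)=0} w(c_i)$, the subtracted term equals $t(wr(K)-Q_K)$. On the other side,
\[ (P_K(t)+Q_K)\,t \;=\; t\sum_{c_i} w(c_i)\,t^{W(c_i)} \;-\; t\cdot wr(K) \;+\; t\cdot Q_K. \]
Substituting $W(c_i)=\mathrm{Ind}(c_i)$ in the first sum and combining the last two terms as $-t(wr(K)-Q_K)$ makes the two expressions coincide.

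The main obstacle is the local lemma $W(c_i)=\mathrm{Ind}(c_i)$. It is ultimately a bookkeeping identity, but one must juggle the orientation conventions of Figures 9 and 14 simultaneously and treat positive and negative crossings in separate cases, verifying that the $\pm 1$ shift hard-wired into the definition of $W$ precisely cancels the self-contribution of $c_i$ that appears when one unwraps the arc-label difference as a sum over arrows of the Gauss diagram.
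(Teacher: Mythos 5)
Your proposal is correct and follows essentially the same route as the paper: both reduce the theorem to the key identity $W(c_i)=\mathrm{Ind}(c_i)$ and then conclude by the same algebraic rearrangement using $Q_K=wr(K)-\sum_{\mathrm{Ind}(c_i)=0}w(c_i)$. The only (cosmetic) difference is in verifying that identity: the paper compares Kauffman's labeling with the complementary ``overcrossing-first'' labeling, whose arcwise sum is the constant $wr(K)$, to get $N(c_i)=W(c_i)+1$ and then invokes Lemma 3.2, whereas you unwrap the arc-label difference directly as a signed count of arrows crossing the chord $c_i$; both are the same bookkeeping.
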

\begin{proof}
The relation between the writhe polynomial and the affine index polynomial mainly follows from the fact that Ind$(c_i)=W(c_i)$. In fact if we translate the labeling rule from a knot diagram to the corresponding Gauss diagram, it becomes an integer labeling on each arc of the circle. Instead of the coloring introduced during the definition of the odd writhe polynomial, we can introduce another integer assignment to each arc. As before first we choose a point in one arc, then walk along the circle from this point according to the orientation. This time we focus on the crossings which we meet the preimage of the overcrossing earlier than the preimage of the undercrossing. Take the sum of the signs of these crossings, and assign this integer to the arc we choose. It is not difficult to find that this labeling satisfies the labeling rule proposed by L. Kauffman. Notice that for each arc, the sum of the assigned integers from these two labeling rules is exactly the writhe of the diagram. See the figure below, here $w$ denotes the writhe of the diagram.
\begin{center}
\includegraphics{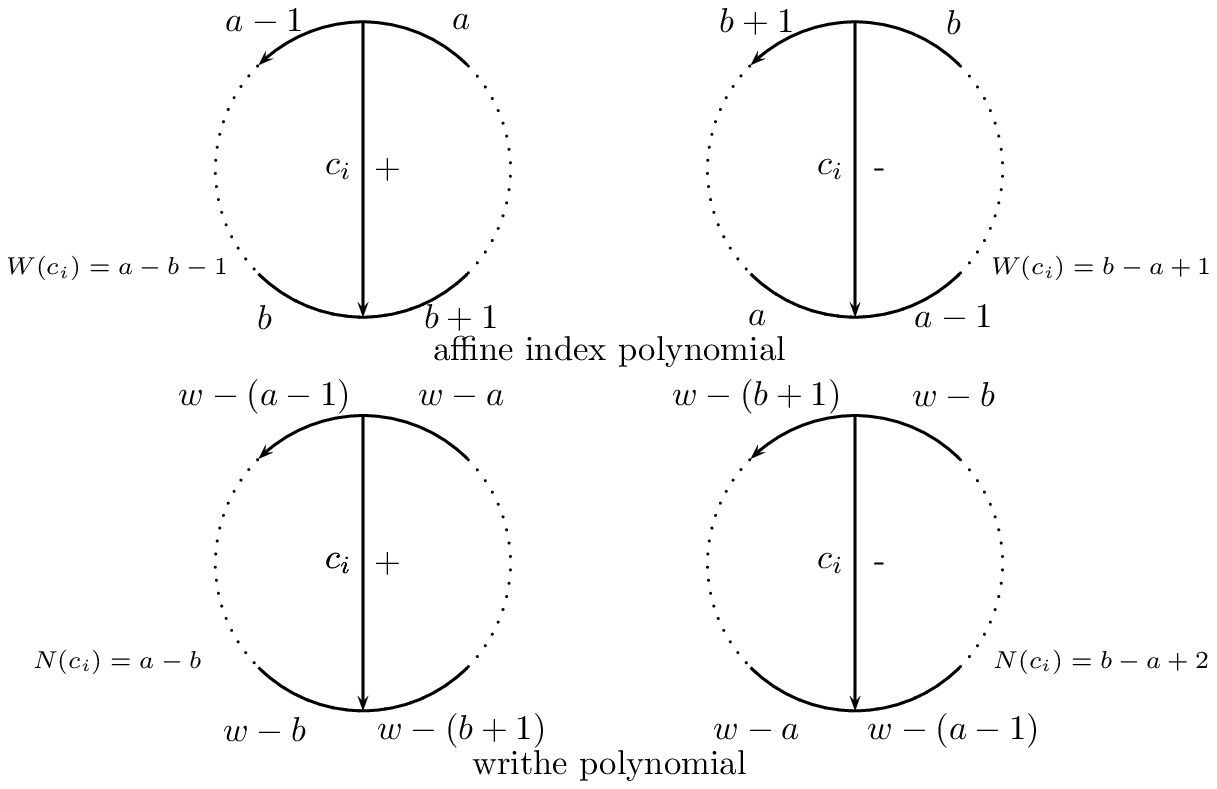}\\
Figure 16
\end{center}
From the figure above we can conclude that $N(c_i)=W(c_i)+1$, combining with Lemma 3.2 we have Ind$(c_i)=W(c_i)$. It follows that
\begin{equation*}
    \begin{array}{rcl}
        W_K(t) & = & \sum\limits_{c_i}w(c_i)t^{\text{Ind}(c_i)+1}-\sum\limits_{\text{Ind}(c_i)=0}w(c_i)t^{\text{Ind}(c_i)+1}\\
         & = & (\sum\limits_{c_i}w(c_i)t^{\text{Ind}(c_i)}-\sum\limits_{\text{Ind}(c_i)=0}w(c_i)t^{\text{Ind}(c_i)})t\\
         & = & (\sum\limits_{c_i}w(c_i)t^{W(c_i)}-\sum\limits_{\text{Ind}(c_i)=0}w(c_i)t^0)t\\
         & = & (\sum\limits_{c_i}w(c_i)t^{W(c_i)}-\sum\limits_{c_i}w(c_i)+\sum\limits_{\text{Ind}(c_i)\neq0}w(c_i))t\\
         & = & (P_K(t)+Q_K)t
    \end{array}
\end{equation*}
\end{proof}

\textbf{Remark} Although the writhe polynomial is the sum of finitely number of nontrivial polynomial invariants, i.e. $W_K(t)=\sum\limits_{k=0}^\infty f_k(t)$, however from the definition of $f_k(t)$ it is not difficult to observe that the writhe polynomial $W_K(t)$ contains the same information as $\{f_0(t), f_1(t), f_2(t), \cdots\}$. In other words with a given writhe polynomial $W_K(t)$, the sequence of polynomial invariants $\{f_0(t), f_1(t), f_2(t), \cdots\}$ can be obtained from $W_K(t)$.

According to Theorem 3.6, we know that the writhe polynomial is essentially equivalent to the affine index polynomial (and the wriggle polynomial \cite{Fol2012}). In \cite{Kau2012} the Vassiliev invariants obtained from the affine index polynomial are given. Besides, L. Kauffman also discussed the affine index polynomial with the viewpoint of flat biquandle. The behavior of wriggle polynomial (which is equivalent to the affine index polynomial) under mutation is considered in \cite{Fol2012}. See \cite{Kau2012, Fol2012} for more interesting examples and applications of the affine index polynomial.

\section{The writhe polynomial on long flat virtual knots}
A long knot is a 1-1 tangle with one input end and one output end. Given a long knot we can get a knot (its closure) by connecting the two ends of the long knot. Conversely a long knot can be obtained by removing one point from a knot. In classical knot theory a long knot is knotted if and only if its closure is knotted. And the place where the point is removed is not important. However in virtual knot theory this is not the case. For example a nontrivial long virtual knot may have trivial closure. For more information about long virtual knot the reader is referred to \cite{Gou2000,YHI2010,YHI2012,Ish2010,Kau2011,Man2004,Man2004'}.

Given a long virtual knot diagram $\widetilde{K}$, by attaching the ends we obtain a virtual knot $C(\widetilde{K})$, i.e. the closure of $\widetilde{K}$. Hence we can define the writhe polynomial $W_{C(\widetilde{K})}(t)$, obviously $W_{C(\widetilde{K})}(t)$ is an invariant of $\widetilde{K}$. According to Proposition 3.4, $W_{C(\widetilde{K})}(t)$ can be used to distinguish $\widetilde{K}$ from its inverse or mirror image. However it is a rather ``weak" invariant of long virtual knots in some sense. For example in virtual knot theory, there exists a pair of different long virtual knots which have the same closure. In this case the polynomial $W_{C(\widetilde{K})}(t)$ can not tell the difference between these long virtual knots.

A \emph{long flat virtual knot} is a long virtual knot without the classical crossing information. Therefore there are only two kinds of crossing points: the virtual crossing, and the flat crossing. A flat crossing is usually indicated by transversely intersecting line segments. And the Reidemeister moves in long flat virtual knot are exactly the moves given in Figure 2 where all classical crossings are replaced by flat crossings. Without loss of generality we always assume that the the long knot is oriented from left to right. It is evident that if there is no virtual crossing points on a long flat virtual knot diagram, then the long flat virtual knot is trivial, i.e. equivalent to a straight line.

Given a long flat virtual knot, by replacing each flat crossing by an overcrossing or an undercrossing, one obtains a long virtual knot. If the long flat virtual knot has $n$ flat crossings then it corresponds to $2^n$ long virtual knot diagrams totally. Conversely, with a given long virtual knot $\widetilde{K}$, by forgetting the crossing information of each classical crossing point we obtain a long flat virtual knot $F(\widetilde{K})$. According to \cite{Kau2011} we say the long virtual knot \emph{overlies} the associated long flat virtual knot. Obviously if $F(\widetilde{K})$ is nontrivial then $\widetilde{K}$ is necessary nontrivial. Hence the non-triviality of a long flat virtual knot induces the non-trivialities of all the associated long virtual knots. Note that a long flat virtual knot is trivial if and only if it is classical (contains no virtual crossings), hence the non-triviality of a long flat virtual knot implies that all long virtual knots that overlie it are nonclassical. Therefore finding some nontrivial invariants of the long flat virtual knots is always very helpful.

Let $K$ be a long flat virtual knot, and $\widetilde{K}$ a long virtual knot that overlies $K$, i.e. $K=F(\widetilde{K})$. Then the ``Gauss diagram" of $\widetilde{K}$, say $G(\widetilde{K})$, can be constructed similarly. Instead of the circle, now the preimage of the long virtual knot is still a real line, which is also oriented from the left side to the right side. As before for each classical crossing point there is an arc $($we still call it a arrow$)$ oriented from the preimage of the overcrossing to the preimage of the undercrossing. Here each arrow is always been drawn as an oriented semi-circle above the real line. The figure below gives an example.
\begin{center}
\includegraphics{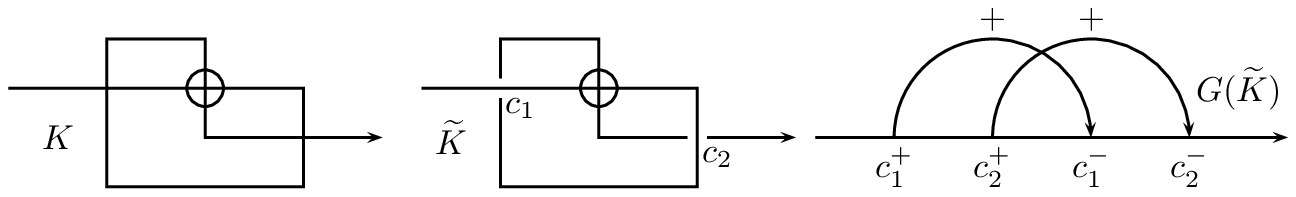}\\
Figure 17
\end{center}

For an arrow $c_i$ in $G(\widetilde{K})$, we will label two integers to it, $o(c_i)$ and $I(c_i)$. The integer $o(c_i)$ can be defined as follows, if the orientation of $c_i$ is the same as the orientation of the long line, then we define $o(c_i)=1$, otherwise $o(c_i)=-1$.
\begin{center}
\includegraphics{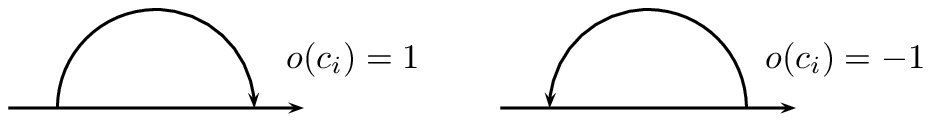}\\
Figure 18
\end{center}

To define $I(c_i)$, let us consider the arrows that cross $c_i$ transversely in the Gauss diagram. According to the orientations of these arrows, some arrows are oriented from the inside of $c_i$ to the outside of $c_i$, and the others are oriented from the outside of $c_i$ to the inside of $c_i$. Here the inside of $c_i$ we mean the segment between the $c_i^+$ and $c_i^-$, and $c_i^+$ $(c_i^-)$ denotes the preimage of the overcrossing $($undercrossing$)$ of $c_i$. Then we use $u_+(c_i)$ $(u_-(c_i))$ to denote the number of positive $($negative$)$ arrows that cross $c_i$ from inside to outside, and $d_+(c_i)$ $(d_-(c_i))$ to denote the number of positive $($negative$)$ arrows that cross $c_i$ from outside to inside. Now we can define the \emph{index} Ind$(c_i)$ $($or $I(c_i)$ for short$)$ of $c_i$ by the equality below
\begin{center}
$I(c_i)=u_+(c_i)-u_-(c_i)-d_+(c_i)+d_-(c_i).$
\end{center}
Note that $I(c_i)$ does not depend on the value of $o(c_i)$.
\begin{center}
\includegraphics{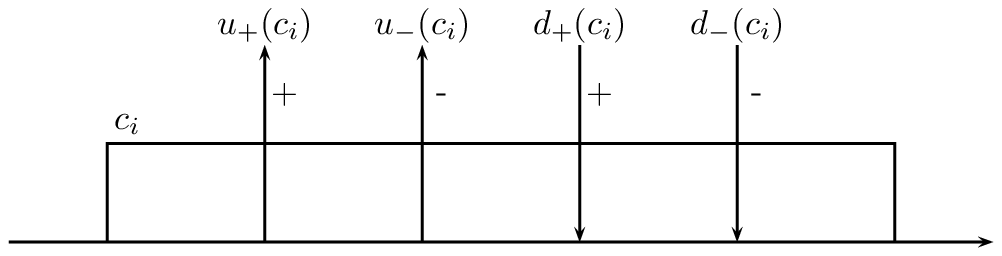}\\
Figure 19
\end{center}

Now we can define the \emph{writhe polynomial} for the long flat virtual knot $K$, which is a bit different from the writhe polynomial of a virtual knot defined in Section 3. The writhe polynomial of the long flat virtual knot $K$ is defined to be
\begin{center}
$W_K(t)=\sum\limits_{I(c_i)\neq0}o(c_i)w(c_i)t^{I(c_i)}$.
\end{center}

\begin{theorem}
The writhe polynomial defined above is a well defined polynomial invariant of long flat virtual knots.
\end{theorem}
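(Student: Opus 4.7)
The plan has two parts: first, show that the sum defining $W_K(t)$ depends only on the flat knot $K$ and not on the long virtual knot lift $\widetilde{K}$ used to compute it; second, show that $W_K(t)$ is invariant under the flat Reidemeister moves on $K$. Both points together give the theorem.

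For well-definedness, I would verify that switching the over/under information at a single classical crossing $c_j$ of $\widetilde{K}$ leaves every summand unchanged; since any two lifts of $K$ differ by a sequence of such switches, this suffices. A switch reverses the arrow $c_j$ in the Gauss diagram and flips the sign $w(c_j)$, so $o(c_j)$ and $w(c_j)$ both change sign and the product $o(c_j)w(c_j)$ is preserved; the endpoints of $c_j$ on the line are untouched, so the inside/outside partition is unchanged and $I(c_j)$ is unchanged. For $i\neq j$ the labels $o(c_i),w(c_i)$ are clearly unaffected, and the only way $c_j$ can influence $I(c_i)$ is as one contribution in the formula $I(c_i)=u_+-u_--d_++d_-$. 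Switching $c_j$ interchanges the roles $(u_+\leftrightarrow d_-)$ and $(u_-\leftrightarrow d_+)$ of its contribution; because $u_+$ and $d_-$ both enter with coefficient $+1$ and $u_-$ and $d_+$ both with coefficient $-1$, this exchange is invisible to $I(c_i)$. Hence $W_K(t)$ is independent of the chosen lift.

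For invariance under flat Reidemeister moves I would adapt the proof of Theorem 3.3 to the long-knot setting. The flat versions of $\Omega'_1,\Omega'_2,\Omega'_3,\Omega_3^s$ preserve the Gauss diagram entirely and hence preserve $W_K(t)$ trivially, so by the long-knot analogue of Polyak's result it suffices to check the flat versions of $\Omega_{1a},\Omega_{1b},\Omega_{2a},\Omega_{3a}$. Since $W_K(t)$ depends only on $K$, I may fix a convenient lift of each flat move and check invariance of the polynomial in that lift. For the flat $\Omega_1$ moves the newly created arrow is isolated, so $I=0$ and it is excluded from the sum by definition, while the endpoints, $o$-labels and signs of all other arrows are unaffected, so their indices are unchanged. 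For the flat $\Omega_{3a}$ the reorganisation of the Gauss diagram is identical to the one analysed in Section 3, so $I(c_i)$, $o(c_i)$ and $w(c_i)$ are preserved for every arrow, including the three arrows involved.

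The main obstacle is the flat $\Omega_{2a}$ case. Here one must show that the two arrows $c_1,c_2$ created by the move contribute cancelling terms, which requires simultaneously $o(c_1)w(c_1)=-o(c_2)w(c_2)$ and $I(c_1)=I(c_2)$. Choosing a lift in which the R2 move is realised in the usual way, the two crossings have opposite writhe, and a case analysis of how the four endpoints of $c_1$ and $c_2$ appear on the oriented preimage line shows that the arrow directions couple to the sign flip so that $o(c_1)w(c_1)+o(c_2)w(c_2)=0$, while the local symmetry forces $I(c_1)=I(c_2)$; hence the two new terms cancel whether or not their common index is nonzero. For any arrow $c_j$ not involved in the move, one checks that $c_1$ crosses $c_j$ if and only if $c_2$ does, and when they both cross $c_j$ they do so in the same direction but with opposite signs, so their contributions to $I(c_j)$ cancel; the label $o(c_j)$ is unchanged. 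This completes the verification of all four generating moves and therefore proves that $W_K(t)$ is a well-defined polynomial invariant of long flat virtual knots.
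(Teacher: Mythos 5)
Your proposal is correct and follows essentially the same route as the paper: well-definedness is established by checking that a single crossing switch preserves $o(c)w(c)$ and every index $I(c_i)$ (your $u_+\leftrightarrow d_-$, $u_-\leftrightarrow d_+$ bookkeeping is the detailed version of the paper's one-line claim), and invariance is then checked on a chosen lift for the generating moves $\Omega_{1a},\Omega_{1b},\Omega_{2a},\Omega_{3a}$ by tracking the triple $\{w(c),o(c),I(c)\}$, with the same cancellation $o(c_1)w(c_1)=-o(c_2)w(c_2)$, $I(c_1)=I(c_2)$ in the $\Omega_{2a}$ case. No substantive differences.
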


\begin{proof}
First we need to show that $W_K(t)$ is well defined, i.e. it does not depend on the choice of $\widetilde{K}$. In order to show this, it suffices to prove that $W_K(t)$ is invariant under switching any crossing point of $\widetilde{K}$. Assume $\widetilde{K'}$ is another long virtual knot which can be obtained from $\widetilde{K}$ by switching one classical crossing $c$, obviously $\widetilde{K'}$ also overlies $K$. Notice that switching the crossing point $c$ will change the writhe $w(c)$ and $o(c)$, but preserve $I(c)$. For any other arrow $c'$ in the Gauss diagram, $w(c')$ and $o(c')$ are unchanged. If $c\cap c'\neq\emptyset$, i.e. , since $w(c)$ and $o(c)$ are both changed then the contribution of $c$ to $I(c')$ in $G(\widetilde{K})$ is equivalent to that in $G(\widetilde{K'})$. As a result the writhe polynomial defined from $\widetilde{K'}$ is the same as the polynomial defined from $\widetilde{K}$. It follows that the writhe polynomial is a well defined polynomial.

Next we want to prove that the writhe polynomial is an invariant of long flat virtual knots. It is sufficient to show that the writhe polynomial is invariant under all Reidemeister moves in long flat virtual knot theory. Since the writhe polynomial is well defined, hence for a pair of long flat virtual knots which are related by one Reidemeister move we just need to consider a pair of long virtual knots which overlie that two long flat virtual knots, and they can be obtained from each other by the corresponding Reidemeister move in long virtual knot theory. Let us consider the Reidemeister moves given in figure 10.
\begin{enumerate}
\item $\Omega_{1a}$ on a knot diagram corresponds to adding or removing an isolated arrow hence with trivial index on the Gauss diagram. Obviously for any other arrow $c$, the triple $\{w(c), o(c), I(c)\}$ is preserved, it follows that the writhe polynomial is invariant under $\Omega_{1a}$. For $\Omega_{1b}$ the analysis is the same.
\item For the second Reidemeister move $\Omega_{2a}$, clearly on the Gauss diagram the corresponding move is adding or removing a pair of arrows, say $c_1$ and $c_2$. From figure 12 we conclude that
\begin{center}
$o(c_1)=o(c_2)$, $w(c_1)=-w(c_2)$ and $I(c_1)=I(c_2)$.
\end{center}
Hence their contributions to the writhe polynomial will be canceled. On the other hand since they have the same orientation but different writhes, the triple $\{w(c), o(c), I(c)\}$ for any other arrow $c$ is invariant. Therefore the writhe polynomial is also preserved under $\Omega_{2a}$.
\item Similarly, the third Reidemeister move $\Omega_{3a}$ also preserves the writhe polynomial. In order to see this we just need to notice that for each arrow $c$ on the Gauss diagram, the triple $\{w(c), o(c), I(c)\}$ is kept under $\Omega_{3a}$. Hence the writhe polynomial is also invariant under $\Omega_{3a}$.
\end{enumerate}

In conclusion, we have shown that the writhe polynomial $W_K(t)=\sum\limits_{I(c_i)\neq0}o(c_i)w(c_i)t^{I(c_i)}$ does not depend on the choice of the long virtual knot that overlies $K$, and it is preserved under all Reidemeister moves. The proof is finished.
\end{proof}

\textbf{Remark} Replacing $t$ by 1 in the writhe polynomial we get $W_K(1)=\sum\limits_{I(c_i)\neq0}o(c_i)w(c_i)$. Obviously this is an integer invariant of long flat virtual knots. We remark that the relation between $\sum\limits_{I(c_i)\neq0}o(c_i)w(c_i)$ and the writhe polynomial in long flat virtual knot theory is analogous to the relation between $Q_K$ and the writhe polynomial in virtual knot theory.

Now we have defined a polynomial invariant $W_K(t)$ for a long flat virtual knot $K$. A natural question is: are there any relations between the writhe polynomial of a long flat virtual knot $K$ and the writhe polynomial of long virtual knots that overlie $K$?

As we have mentioned before, given a long flat virtual knot $K$, there are a total of $2^n$ long virtual knots that overlie $K$. Here $n$ denotes the number of classical crossing points of $K$. Let us consider the descending long virtual knot diagram $D(K)$. By a \emph{descending} long virtual knot diagram, we mean the long virtual knot diagram which is obtained from $K$ by creating a classical crossing point at each flat crossing such that if we traverse $K$ according to the orientation $($from its left end to the right end$)$, for each classical crossing we will pass over the crossing before passing under the crossing. Meanwhile all virtual crossing points are kept. Obviously for each arrow $c_i$ in the Gauss diagram of $D(K)$ we have $o(c_i)=1$. Consider the closure of the long virtual knot $D(K)$, we denote it by $C(D(K))$. Then the index of each arc in the Gauss diagram of $D(K)$ exactly coincides with the index of the corresponding arrow in the Gauss diagram of $C(D(K))$, and the writhe of each crossing of $D(K)$ equals to the writhe of the corresponding crossing in $C(D(K))$. It follows that
\begin{center}
$W_K(t)=\sum\limits_{I(c_i)\neq0}o(c_i)w(c_i)t^{I(c_i)}=\sum\limits_{\text{Ind}(c_i)\neq0}w(c_i)t^{\text{Ind}(c_i)}=W_{C(D(K))}(t)\cdot t^{-1}$.
\end{center}
Hence the writhe polynomial of a long flat virtual knot $K$ is essentially equivalent to the writhe polynomial of the closure of the descending long virtual knot of $K$. That is the reason why we still call it the writhe polynomial. From this point of view we can easily conclude that $W_K(t)=-W_{\overline{K}}(t)$ from Proposition 3.4, here $\overline{K}$ denotes the inverse of the long flat virtual knot $K$. Hence the writhe polynomial sometimes can distinguish a long flat virtual knot from its inverse. We remark that the definition with this viewpoint can be regarded as an application of the Long Flat Embedding Theorem in \cite{Kau2011}.

For a long virtual knot with trivial closure. It is unable to distinguish whether this long virtual knot is non-classical or not by studying its closure. However sometimes we can prove it is non-classical by showing the associated long flat virtual knot is nontrivial. For example the long virtual knot $\widetilde{K}$ in figure 20 has a trivial closure $C(\widetilde{K})$, hence all invariants obtained from $C(\widetilde{K})$ are trivial. However the associated long flat virtual knot $F(\widetilde{K})$ is nontrivial, since $W_{F(\widetilde{K})}(t)=t+t^{-1}$. It follows that $\widetilde{K}$ is nonclassical and hence nontrivial. In fact since the odd writhe of $D(F(\widetilde{K}))$ is 2, therefore $F(\widetilde{K})$ is nontrivial which also implies that $\widetilde{K}$ is nonclassical.
\begin{center}
\includegraphics{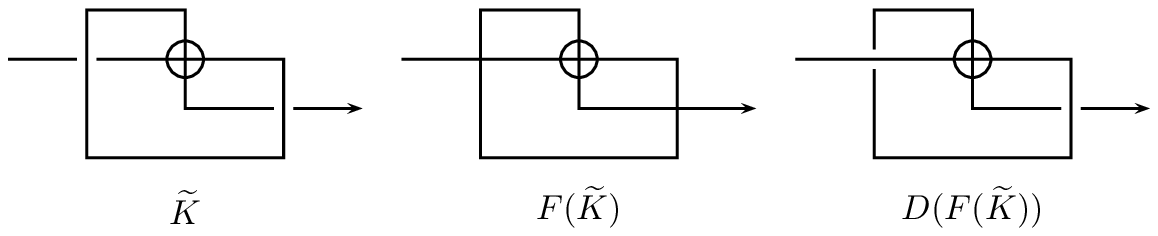}\\
Figure 20
\end{center}

Next we give another example of long virtual knots, for convenience here we only give the Gauss diagrams of them.
\begin{center}
\includegraphics{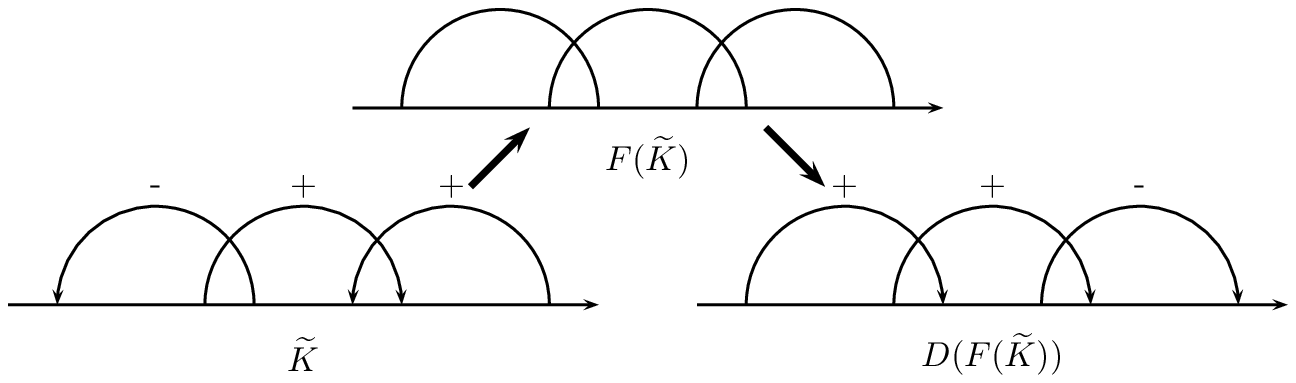}\\
Figure 21
\end{center}
First we have a long virtual knot $\widetilde{K}$, and the closure of $\widetilde{K}$ has trivial odd writhe. Consider the associated long flat virtual knot $F(\widetilde{K})$ and its descending long virtual knot diagram $D(F(\widetilde{K}))$. Now the odd writhe of $D(F(\widetilde{K}))$ is trivial hence the odd writhe does not work here. However, we have $W_{F(\widetilde{K})}=t-t^{-1}+t^{-2}$. As a consequence, the long virtual knot $\widetilde{K}$ is nonclassical and nontrivial.

Note that although the connected sum is not well defined for virtual knots, it is well defined for long virtual knots and long flat virtual knots. For a pair of long virtual knots $\widetilde{K_1}$ and $\widetilde{K_2}$, the \emph{connected sum} $\widetilde{K_1}\#\widetilde{K_2}$ can be obtained by gluing the right end of $\widetilde{K_1}$ to the left end of $\widetilde{K_2}$. For the long flat virtual knots the construction is similar. The following proposition is obvious.
\begin{proposition}\textcolor[rgb]{1.00,1.00,1.00}{c}
\begin{enumerate}
  \item Let $\widetilde{K_1}$ and $\widetilde{K_2}$ be a pair of long virtual knots, then $W_{C(\widetilde{K_1}\#\widetilde{K_2})}(t)=W_{C(\widetilde{K_1})}(t)+W_{C(\widetilde{K_2})}(t)$.
  \item Let $K_1$ and $K_2$ be a pair of long flat virtual knots, then $W_{K_1\#K_2}(t)=W_{K_1}(t)+W_{K_2}(t)$.
\end{enumerate}
\end{proposition}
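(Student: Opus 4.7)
The plan is to read off both identities from a single structural observation about the Gauss diagram of a connected sum. For part (1), the connected sum $\widetilde{K_1}\#\widetilde{K_2}$ is formed by gluing the right end of $\widetilde{K_1}$ to the left end of $\widetilde{K_2}$; closing up to $C(\widetilde{K_1}\#\widetilde{K_2})$ produces a circle on which the preimages of all classical crossings coming from $\widetilde{K_1}$ lie inside one arc and those coming from $\widetilde{K_2}$ lie inside the complementary arc. Consequently no chord originating in $\widetilde{K_1}$ can cross a chord originating in $\widetilde{K_2}$ transversely, because their endpoints separate into two disjoint arcs of the circle.

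From this disjointness, for every crossing $c_i$ of $C(\widetilde{K_1}\#\widetilde{K_2})$ the four counts $r_\pm,l_\pm$ used in the index are unchanged if we delete the ``other half'' of the diagram. Hence $\text{Ind}(c_i)$ computed in $C(\widetilde{K_1}\#\widetilde{K_2})$ agrees with the index of the same crossing in $C(\widetilde{K_j})$, and the writhe $w(c_i)$ is preserved too. Using the alternative formula $W_K(t)=\sum_{\text{Ind}(c_i)\neq 0} w(c_i)t^{\text{Ind}(c_i)+1}$ established before Theorem 3.6, the sum immediately splits as $W_{C(\widetilde{K_1})}(t)+W_{C(\widetilde{K_2})}(t)$.

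For part (2) I would exploit the bridge formula $W_K(t)=W_{C(D(K))}(t)\cdot t^{-1}$ derived at the end of Section 4. It suffices to check that the descending long virtual diagram of $K_1\# K_2$ can be chosen as $D(K_1)\# D(K_2)$: traversing the glued diagram from left to right, each flat crossing within the $K_j$ block is encountered for the first time inside that block, so the ``over first'' rule assigns the same over/under pattern as in $D(K_j)$. Then apply part (1) to $\widetilde{K_j}=D(K_j)$ and multiply both sides by $t^{-1}$. Alternatively one can repeat the direct Gauss-diagram argument on the line: the arrows of $D(K_1)$ sit over the left half-line and the arrows of $D(K_2)$ over the right half-line, so no arrow of one summand crosses an arrow of the other, and the triples $\{w(c_i),o(c_i),I(c_i)\}$ are preserved under the inclusion into the connected sum.

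There is no real obstacle here; the only point that needs care is the claim that the descending diagram of the connected sum is itself a connected sum of descending diagrams, which is an immediate consequence of the fact that one traverses $K_1$ entirely before reaching any crossing of $K_2$. Everything else is bookkeeping, which explains the author's remark that the proposition is obvious.
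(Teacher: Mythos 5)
Your argument is correct, and it is essentially the argument the paper has in mind: the paper offers no proof at all (it declares the proposition obvious), and the intended justification is exactly your observation that in the Gauss diagram of a connected sum the chords of the two summands occupy disjoint arcs, so no chord of one crosses a chord of the other and all indices, writhes, and (for part (2)) the triples $\{w(c_i),o(c_i),I(c_i)\}$ are computed summand by summand. Your extra care about $D(K_1\#K_2)=D(K_1)\#D(K_2)$ is a valid way to route part (2) through part (1), though the direct splitting on the line works just as well.
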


Crossing number is a basic invariant in classical knot theory. It is well known that in virtual knot theory there are three kinds of crossing number: the real crossing number, the virtual crossing number and the crossing number. The \emph{real crossing number} $c_r(K)$ is the minimal number of real crossing points of all diagrams representing $K$. Similarly the \emph{virtual crossing number} $c_v(K)$ is the minimal number of virtual crossings of all diagrams representing $K$. The \emph{crossing number} $c(K)$ can be defined to be the minimal number of crossings $($real and virtual$)$ of all diagrams representing $K$. As in the classical knot theory, it is a challenging task to determine each crossing number of a virtual knot. Some lower bounds of the real crossing number and virtual crossing number are obtained by studying the arrow polynomial and the Miyazawa polynomial \cite{Dye2009,Miy2008}. In \cite{Che2012} some information of the real crossing number are also given with the odd writhe polynomial. Analogously, the writhe polynomial of a virtual knot also can offer some information about the real crossing number.

Given a long flat virtual knot $K$, there are only two kinds of crossing point, the flat crossing and the virtual crossing. We define the \emph{flat crossing number} of $K$ to be the minimal number of flat crossing points of all diagrams representing $K$. Let $c_f(K)$ denote the flat crossing number of $K$, according to the relation between a long virtual knot and the associated long flat virtual knot we have
\begin{center}
$c_f(K)\leq c_r(\widetilde{K})$,
\end{center}
here $\widetilde{K}$ denotes an arbitrary long virtual knot which overlies $K$. Hence the flat crossing number of $K$ gives a lower bound of the real crossing number for all long virtual knots that overlie $K$. See \cite{YHI2010,Ish2010} for more information about the crossing number of long virtual knots.

Consider the writhe polynomial of a long flat virtual knot $K$, let $s(K)$ denote the sum of the absolute values of the coefficients of $W_K(t)$. Equivalently speaking, if $W_K(t)=a_nt^n+a_{n+1}t^{n+1}+\cdots+a_{m}t^{m}$ $(n\leq m)$, here $n$ and $m$ denote the lowest degree and the highest degree of the polynomial respectively. Then we define $s(K)$ by the equation below
\begin{center}
$s(K)=|a_n|+|a_{n+1}|+\cdots+|a_m|.$
\end{center}

\begin{proposition}
Given a long flat virtual knot $K$, we have $s(K)\leq c_f(K)$.
\end{proposition}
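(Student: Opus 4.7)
The plan is to exploit the fact that the writhe polynomial is already written as a sum of $\pm 1$ monomials indexed by classical crossings, so each crossing of a diagram can contribute at most $1$ to the sum of absolute values of coefficients.

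First, I would pick a long flat virtual knot diagram of $K$ realizing the flat crossing number, that is, a diagram with exactly $c_f(K)$ flat crossings. Since the writhe polynomial is a long flat virtual knot invariant (by Theorem~4.1), $W_K(t)$ can be computed from this minimal diagram. To do so, I would choose any long virtual knot $\widetilde{K}$ overlying this minimal diagram; by the well-definedness part of Theorem~4.1, the resulting polynomial is the same $W_K(t)$ regardless of the choice.

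Next, I would write out the definition
\begin{equation*}
W_K(t)=\sum_{I(c_i)\neq 0}o(c_i)w(c_i)\,t^{I(c_i)}
\end{equation*}
applied to $\widetilde{K}$. The sum ranges over the classical crossings of $\widetilde{K}$ (equivalently, the flat crossings of the minimal diagram of $K$) whose index is nonzero, so it has at most $c_f(K)$ terms. Each term is a monomial whose coefficient $o(c_i)w(c_i)$ lies in $\{+1,-1\}$.

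Finally, I would group these monomials by exponent to obtain the expansion $W_K(t)=\sum_{j}a_j t^{j}$. By the triangle inequality, each coefficient $a_j$ is bounded in absolute value by the number of crossings $c_i$ contributing to degree $j$, so
\begin{equation*}
s(K)=\sum_{j}|a_j|\le \#\{c_i:I(c_i)\neq 0\}\le c_f(K),
\end{equation*}
which is the claim. There is no real obstacle here; the argument is just careful accounting, and the only subtlety is justifying that it suffices to work with a specific minimal diagram, which is handled by invariance of $W_K(t)$ under the Reidemeister moves and independence from the chosen lift $\widetilde{K}$.
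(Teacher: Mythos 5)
Your proposal is correct and follows the same route as the paper's (one-line) proof: the paper simply observes that each flat crossing contributes at most one to $s(K)$, and you have filled in the details — choosing a diagram realizing $c_f(K)$, invoking invariance of $W_K(t)$, and applying the triangle inequality to the $\pm1$ coefficients. No further comment is needed.
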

\begin{proof}
According to the definition of the writhe polynomial, the contribution of each flat crossing of $K$ to $s(K)$ is at most one. Hence the result follows.
\end{proof}

For example let us revisit the long flat virtual knot $K$ in figure 17. Its writhe polynomial is $t+t^{-1}$, hence the flat crossing number of $K$ is exactly 2. As a consequence the real crossing numbers of the four associated long virtual knots are also 2. Similarly the writhe polynomial of the long flat virtual knot $F(\widetilde{K})$ in figure 21 is $t-t^{-1}+t^{-2}$, it follows that $c_f(F(\widetilde{K}))=c_r(\widetilde{K})=3$.

\section{The linking polynomial of virtual links}
In Section 3 we introduce the writhe polynomial invariant, we name it in this way because it can be regarded as a generalization of the writhe, although the writhe is not an invariant of virtual knots. Or strictly speaking we should regard the writhe polynomial invariant as a generalization of the integer invariant $Q_K$. The aim of this section is to generalize the linking number of a 2-component virtual link to a polynomial invariant, we name it the linking polynomial. The relation between the linking polynomial and the linking number will be discussed in detail later.

In this section when we talk about a virtual link, we always mean a 2-component virtual link. For links with more components the linking polynomial can be defined for each pair of the components, which is similar to the classical case. Given a 2-component $($virtual$)$ link $L=K_1\cup K_2$, we will always abuse the notation, letting $L=K_1\cup K_2$ refer both to a $($virtual$)$ link diagram and to the $($virtual$)$ link itself. If $L$ is classical, then the linking number of $L$ has many different definitions according to different points of view \cite{Rol1976}. Here we define the \emph{linking number} of $L=K_1\cup K_2$ as below
\begin{center}
$lk(L)=\frac{1}{2}\sum\limits_{c_i\in K_1\cap K_2}w(c_i)$,
\end{center}
here $w(c_i)$ denotes the writhe of $c_i$. For a classical link $L$, the linking number of $L$ is always an integer. According to the definition of linking number, $lk(L)$ is invariant if one switches a self-crossing of $L$, here we say a crossing $c$ of $L$ is a \emph{self-crossing point} if $c\in K_1\cap K_1$ or $c\in K_2\cap K_2$. Note that throughout this paper we always abuse the notation $K_i\cap K_i$ $(i=1, 2)$ to denote the set of self-crossings of $K_i$. The reader should not confuse it with the usual set theory notation. We remark that with the viewpoint of parity, we refer to a crossing point as \emph{even} if it is a self-crossing point, otherwise it is \emph{odd}. Then for a given classical or virtual 2-component link, this parity satisfies the parity axioms given in Section 3 \cite{Man2010}.

Given a 2-component virtual link, the linking number can be defined similarly. Let $L=K_1\cup K_2$ be a 2-component virtual link diagram, then we define the \emph{linking number} of $L$ by the equation below
\begin{center}
$lk(L)=\frac{1}{2}\sum\limits_{\text{classical }c_i\in K_1\cap K_2}w(c_i)$.
\end{center}
When $L$ is a classical link, this definition coincides with the definition above. It is easy to observe that the linking number of a virtual link is also an invariant. However if $L$ is not classical, $lk(L)$ need not to be an integer. For example, consider the virtual Hopf link below. Its linking number is $\frac{1}{2}$, hence the virtual Hopf link is nonclassical and nontrivial.
\begin{center}
\includegraphics{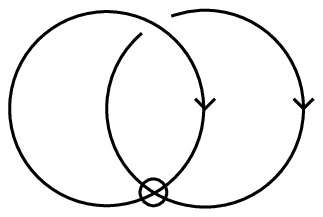}\\
Figure 22
\end{center}

Before giving our construction of the linking polynomial, let us take a short review about the related result in \cite{Kau2012}. In \cite{Kau2012} L. Kauffman defined the affine index polynomial for virtual knots. Recall the coloring in figure 14, for a virtual knot this coloring is always valid. For instance, the coloring which is used in Theorem 3.6 satisfies this labeling rules. However the coloring given in figure 14 does not always exists for virtual links. So if one wants to generalize the affine index polynomial to 2-component links, the first thing is to discuss for which kind of 2-component link this coloring is still valid.

Given a 2-component virtual link $L=K_1\cup K_2$, its Gauss diagram can be constructed similarly. Since $L$ is a 2-component link, the Gauss diagram of $L$ consists of two circles. Each crossing $c\in K_1\cap K_1$ or $K_2\cap K_2$ corresponds to an arrow with two ends on one circle, and each crossing $c\in K_1\cap K_2$ corresponds to an arc connecting the two circles. As before every arrow or arc is oriented from the preimage of the overcrossing to the preimage of the undercrossing. The figure below gives the Gauss diagram of the virtual Hopf link.
\begin{center}
\includegraphics{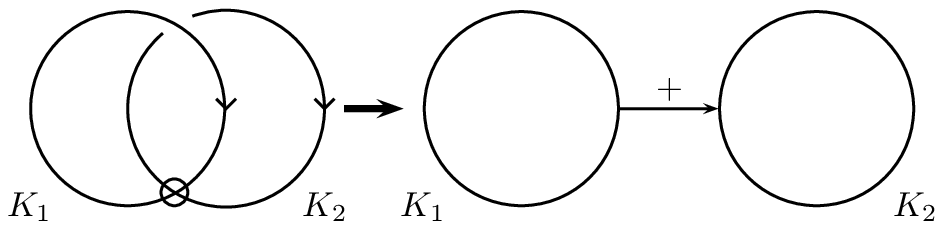}\\
Figure 23
\end{center}

Given a virtual link, consider the Gauss diagram of it. Let $r_+$ $(r_-)$ denote the number of positive $($negative$)$ arcs that are oriented from $K_1$ to $K_2$, and $l_+$ $(l_-)$ denote the number of positive $($negative$)$ arcs with orientation from $K_2$ to $K_1$. Then the linking number of $L$ can be rewritten as
\begin{center}
$2lk(L)=r_+-r_-+l_+-l_-$.
\end{center}
Now we define another integer as
\begin{center}
$span(L)=|r_+-r_--l_++l_-|$.
\end{center}
We call it the \emph{span} of the virtual link. Note that $span(L)$ is exactly the absolute value of the wriggle number introduced in \cite{Fol2012}. The following lemma is evident, which can be also found in \cite{Fol2012}.
\begin{lemma}
The span is a virtual link invariant. In particular $span(L)=0$ if $L$ is classical.
\end{lemma}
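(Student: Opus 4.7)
The plan is to split the lemma into two parts and treat them separately. For the invariance statement it suffices to verify preservation under each type of generalized Reidemeister move; for the vanishing on classical links I would use a planar intersection argument.

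For invariance, the virtual moves $\Omega_1',\Omega_2',\Omega_3',\Omega_3^s$ act trivially on the Gauss diagram and so do not affect $r_\pm$ or $l_\pm$. Among the classical moves, the crucial observation is that only arcs connecting the two circles of the Gauss diagram contribute to these four counts, so any move involving only self-arrows is automatically harmless. An $\Omega_1$ move always introduces a self-arrow on one component, so there is nothing to check. An $\Omega_2$ either produces two self-arrows (fine) or two inter-component arrows. In the latter case a local analysis shows that the two new arrows point in the same direction in the Gauss diagram --- the over-strand belongs to the same component at both new crossings, for otherwise the local tangle would be a Hopf-type tangle rather than a trivial one --- and carry opposite writhes, so their contributions to $r_+-r_-$ or to $l_+-l_-$ cancel exactly. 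An $\Omega_3$ move neither creates nor destroys crossings: it merely repositions three existing crossings within the triangle region while preserving each crossing's writhe, its over/under assignment and hence the direction of its arrow, so $r_\pm$ and $l_\pm$ are individually unchanged.

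For the classical statement I would exploit the planarity of the diagram. Projecting $L=K_1\cup K_2$ to $\mathbb R^2$ produces two oriented closed curves whose algebraic planar intersection number must vanish, since $H_1(\mathbb R^2)=0$. Setting $\epsilon(c)=\operatorname{sign}\det[\vec u_{K_1}(c),\vec u_{K_2}(c)]$ at each mixed crossing $c$, this vanishing is precisely $\sum_c\epsilon(c)=0$. Comparing with the writhe, we have $w(c)=\epsilon(c)$ when $K_1$ lies over at $c$ and $w(c)=-\epsilon(c)$ when $K_2$ lies over, so the sum rewrites as $(r_+-r_-)-(l_+-l_-)=0$, which is exactly $span(L)=0$.

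The main obstacle is the local $\Omega_2$ step: one must check carefully that, in a topologically valid $\Omega_2$ move producing two mixed crossings, the same component lies over at both new crossings, so that the two resulting arrows in the Gauss diagram share a common orientation. Without this observation the cancellation in $r_+-r_-$ and $l_+-l_-$ would fail, and the preservation argument would collapse.
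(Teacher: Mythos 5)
Your proof is correct and complete. The paper itself offers no argument here --- it simply declares the lemma evident and points to the Folwaczny--Kauffman paper --- so there is no ``official'' proof to compare against; your write-up supplies exactly the standard verification. The two points you flag as delicate are handled properly: in a genuine $\Omega_2$ move the bigon has a single over-strand, so when both new crossings are mixed the two arrows do share a direction and their opposite writhes cancel inside $r_+-r_-$ (or $l_+-l_-$) separately; and for the classical case your homological vanishing of the planar intersection number, $\sum_c\epsilon(c)=0$ with $w(c)=\pm\epsilon(c)$ according to which component is on top, gives $(r_+-r_-)-(l_+-l_-)=0$. An equivalent and perhaps more familiar phrasing of that last step: for a classical diagram both $r_+-r_-$ and $l_+-l_-$ compute the linking number $lk(L)$ (the ``count only the crossings where $K_1$ passes over $K_2$'' definition and its mirror), so their difference vanishes; your planar intersection argument is precisely the proof that those two counts agree. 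Either formulation is fine.
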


Now we can give an answer to the question above, in fact we have the following proposition
\begin{proposition}
A 2-component virtual link $L$ can be labeled with integers in the style of figure 14 if and only if $span(L)=0$.
\end{proposition}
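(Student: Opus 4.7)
The plan is to track how the integer labels of Figure~14 are forced to evolve as one traverses each component, and then extract the resulting obstruction. Recall that the local rule of Figure~14 can be phrased as follows: at every classical crossing, the label along the understrand changes by $+1$ while the label along the overstrand changes by $-1$ if the crossing is positive, and the two signs are reversed if it is negative. A consistent global integer labeling exists if and only if, for each component $K_i$, the net label-change accumulated over one full traversal of $K_i$ returns to the starting value.

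The first step is to observe that self-crossings always contribute $0$ to this net change. A self-crossing $c$ of $K_i$ is visited exactly twice while one goes once around $K_i$, once along the overstrand and once along the understrand; the two contributions are $+1$ and $-1$ (if $w(c)=+1$) or $-1$ and $+1$ (if $w(c)=-1$) and cancel. This is precisely the reason, implicit in Theorem~3.6, that a virtual knot diagram always admits such a labeling.

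The second step is to compute the contribution of the mixed crossings to the total around $K_1$. A crossing $c\in K_1\cap K_2$ is met by $K_1$ exactly once, either along the overstrand (when the corresponding arc in the Gauss diagram is oriented from $K_1$ to $K_2$, contributing to $r_\pm$) or along the understrand (when the arc runs from $K_2$ to $K_1$, contributing to $l_\pm$). Summing the local changes yields that the net change of labels around $K_1$ equals
\[
-r_+ + r_- + l_+ - l_-,
\]
and the symmetric computation around $K_2$ gives the negative of this expression. In either case, the obstruction vanishes precisely when $r_+ - r_- - l_+ + l_- = 0$, i.e., exactly when $span(L)=0$.

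For necessity, the existence of a global labeling forces the above sum to vanish on $K_1$ and hence $span(L)=0$. For sufficiency, assuming $span(L)=0$, I would construct a labeling by choosing any arc of $K_1$, setting its label to $0$, and propagating along $K_1$ using the local rule; well-definedness on $K_1$ is guaranteed by the vanishing of the net change computed above. The same procedure, applied to $K_2$ starting from any arc with an arbitrary integer label, yields a globally consistent labeling because the labels on $K_1$ and $K_2$ are constrained only through the independent under/over change rules, not through any direct relation at mixed crossings. The only genuinely delicate point I anticipate is bookkeeping the signs and the orientations of the arcs in the Gauss diagram carefully enough to identify the obtained expression with $\pm(r_+ - r_- - l_+ + l_-)$; once this is done, the rest is routine.
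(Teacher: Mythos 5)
Your argument is correct and follows essentially the same route as the paper: traverse each component, observe that self-crossings contribute cancelling $\pm 1$'s, identify the net holonomy of the label with $\pm(r_+-r_--l_++l_-)=\pm\,span(L)$ up to sign, and for sufficiency propagate the labeling from an arbitrary starting arc on each component independently. Your sign bookkeeping matches the paper's convention, so no further work is needed.
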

\begin{proof}
If $L$ has a coloring as assumed, consider one component of $L$, for example the first component $K_1$. Choose a point of $K_1$ and traverse $K_1$ according to the orientation. By figure 14,  every time when we meet an overcrossing with sign $\pm 1$ the labeled integer will be increased by $\mp 1$, and if we meet an undercrossing with sign $\pm 1$ the labeled integer will be increased by $\pm 1$. Since this labeling exists, when we go back to the original point the labeling integer is preserved. Notice that the contribution coming from all self-crossings of $K_1$ will be killed mutually. It follows that $r_+-r_--l_++l_-=0$.

Now we assume that $span(L)=0$, it suffices to construct a labeling in the manner of figure 14. First let us choose a point on the diagram of $K_1$, and label the associated arc with an integer $p$. Other arcs will be labeled according to the labeling rule given in figure 14. Since $span(L)=|r_+-r_--l_++l_-|=0$, this labeling is well defined. For the other component, the arcs can be labeled analogously.
\end{proof}

From the proposition above we know that the coloring introduced in \cite{Kau2012} exists if and only if the span of the link is trivial. In particular for a classical link diagram this coloring always exists. However it is easy to observe that the colorings of the two components are independent. For example the figure below gives two different coloring of the Hopf link, the associated affine index polynomial are also presented.
\begin{center}
\includegraphics{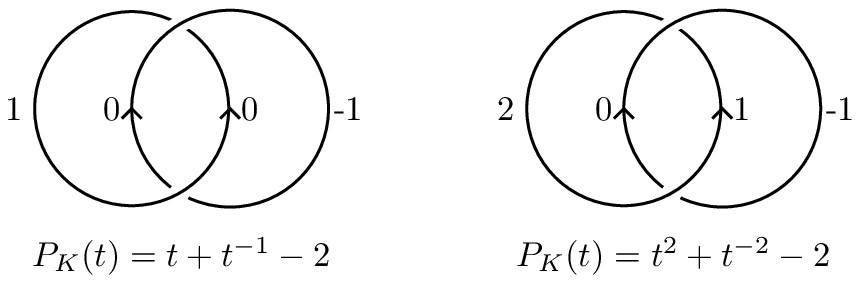}\\
Figure 24
\end{center}
Since the coloring of a link diagram $L$ with $span(L)=0$ is not unique, in general the affine index polynomial is not well defined for links even if the coloring in Figure 14 is valid. The rest of this section is devoted to give an alternative way to define a polynomial invariant of 2-component virtual links.

Given a 2-component virtual link $L=K_1\cup K_2$, there are two circles in the Gauss diagram of $L$, i.e. the preimage of $K_1$ and $K_2$ respectively. Consider the universal cover of these two circles, which are just two real lines, we use $l_1$ and $l_2$ to denote them respectively. Without loss of generality we assume that the orientations of $l_1$ and $l_2$ are both indicated from left to right. Notice that each arrow or arc in the Gauss diagram has no preferred lift to the universal cover, however the two ends of each arrow or arc can be locally lifted to the universal cover. The figure below gives an example of the virtual Hopf link.
\begin{center}
\includegraphics{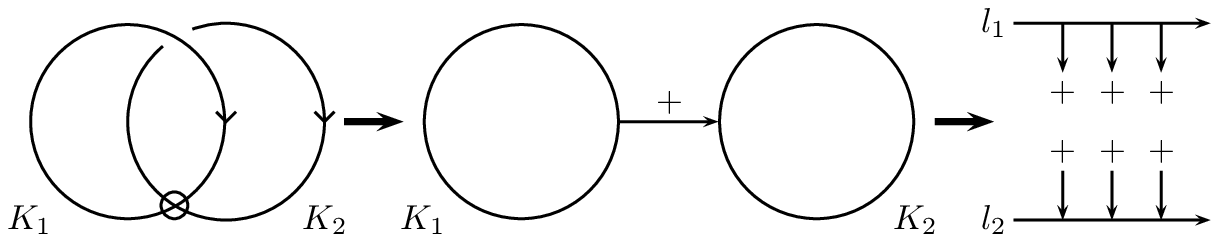}\\
Figure 25
\end{center}
Now choose any segment of $l_1$ and label an integer $p$ to it. For other segments of $l_1$ there is an algorithm to color them with integers. Let us walk along $l_1$ according to the orientation,
\begin{itemize}
  \item when we meet the head of an arrow with positive sign, the next integer will be decreased by one;
  \item when we meet the tail of an arrow with negative sign, the next integer will be decreased by one;
  \item when we meet the head of an arrow with negative sign, the next integer will be increased by one;
  \item when we meet the tail of an arrow with positive sign, the next integer will be increased by one.
\end{itemize}
\begin{center}
\includegraphics{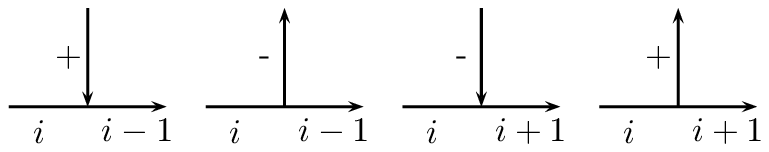}\\
Figure 26
\end{center}
Obviously this labeling is not unique, two labeling are different by a constant integer on each segment. Similarly the second line $l_2$ can also be colored in this manner, and the colorings of $l_1$ and $l_2$ are irrelevant. Then we replace each labeled number $n$ by its equivalent class $[n]\in \mathbb{Z}_{span(L)}$. In particular if $span(L)=0$ then $[n]=n$. If $span(L)=1$ then all labeled numbers are 0, later we will find that nothing interesting can be obtained in this case.

Next we assign an integer $N(c_i)$ to each arc $c_i$ between $l_1$ and $l_2$, just as what we did when we defined the odd writhe polynomial in Section 2. Because there is no preferred choice of lifting for each arc between the two circles, first we choose a lift for each arc arbitrarily. Then we label an integer to each lifted arc according to the figure below.
\begin{center}
\includegraphics{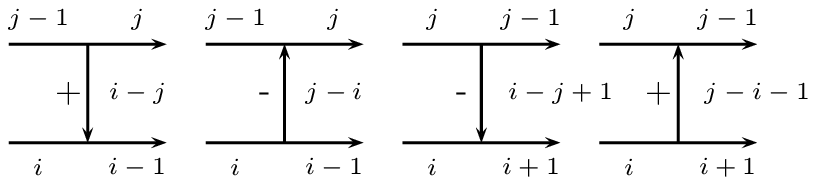}\\
Figure 27
\end{center}
Notice that for a fixed labeling of $l_1$ and $l_2$, the assigned number $N(c_i)$ is well defined. In other words it does not depend on the choice of the lift. This is because the difference of $N(c_i)$ induced by different lifts of $c_i$ is a multiple of $span(L)$. Indeed for a fixed lift of one end of $c_i$, the difference of $N(c_i)$ between two adjacent lifts of the other end of $c_i$ is exactly the span of $L$. Hence different lifts give the same $N(c_i)$ in $\mathbb{Z}_{span(L)}$.

The lifted arcs between $l_1$ and $l_2$ fall into two types: the first type is oriented from $l_1$ to $l_2$, the second type is oriented from $l_2$ to $l_1$. Let $F(t)$ denote the sum of $w(c_i)t^{N(c_i)}$ for all arcs of the first type, and $G(t)$ be the sum of $w(c_i)t^{N(c_i)}$ for all arcs of the second type. Equivalently speaking, we define
\begin{center}
$F(t)=\sum\limits_{c_i:\ l_1\rightarrow l_2}w(c_i)t^{N(c_i)}$, $G(t)=\sum\limits_{c_j:\ l_2\rightarrow l_1}w(c_j)t^{N(c_j)}$.
\end{center}
The following equations are evident.
\begin{center}
$F(1)+G(1)=2lk(L)$, $|F(1)-G(1)|=span(L)$.
\end{center}
Note that since the labeling of each segment of $l_1$ and $l_2$ is not unique, $F(t)$ and $G(t)$ are only determined up to multiplication by the Laurent monomial $t^{\pm k}$.

\begin{theorem}
$F(t)$ and $G(t)$ satisfy the following properties:
\begin{enumerate}
  \item Given a fixed coloring of $l_1$ and $l_2$ respectively, the polynomials $F(t)$ and $G(t)$ are both invariant under all Reidemeister moves.
  \item Given two different colorings of $\{l_1, l_2\}$, then there are two pairs of $\{F(t), G(t)\}$, say $\{F_1(t), G_1(t)\}$ and $\{F_2(t), G_2(t)\}$. If $F_2(t)=F_1(t)\cdot t^k$, then $G_2(t)=G_1(t)\cdot t^{-k}$.
\end{enumerate}
\end{theorem}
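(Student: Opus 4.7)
For part (1), my plan is to adapt the strategy of Theorem 3.3 to the two-component setting. By Polyak's result it suffices to verify invariance under the generating oriented Reidemeister moves $\{\Omega_{1a},\Omega_{1b},\Omega_{2a},\Omega_{3a}\}$; the virtual versions and the detour move preserve the Gauss diagram and thus preserve $F$ and $G$ automatically. For each classical move I would split cases according to which of $K_1,K_2$ the participating strands belong to. In $\Omega_{1a}$ and $\Omega_{1b}$, the new crossing is a self-crossing, so on the universal cover it appears as an isolated self-arrow with both endpoints on the same line $l_i$; by the labeling rule of Figure 26, the head and tail contribute changes $+1$ and $-1$ (or vice versa) to consecutive segments, so the net shift on the rest of $l_i$ is zero, and since an isolated self-arrow is not an arc between $l_1$ and $l_2$ it contributes nothing to $F$ or $G$. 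For $\Omega_{2a}$ either (i) both crossings are self-crossings of one component, in which case the same cancellation argument as for $\Omega_{1a}$ shows that labels on other segments and the $N$-values of other arcs are preserved and the two new self-arrows themselves produce no arc between $l_1$ and $l_2$, or (ii) both crossings are between $K_1$ and $K_2$, in which case the two newly created arcs have the same orientation between $l_1$ and $l_2$, the same $N$-value (a direct reading of Figure 27), and opposite writhes, so their contributions to $F$ (or to $G$) cancel pairwise. For $\Omega_{3a}$, one enumerates which of the three strands lie in $K_1$ and which in $K_2$, and in each sub-case verifies that for every arc or arrow in the Gauss diagram the writhe, orientation type ($l_1\to l_2$ or $l_2\to l_1$ or self), and index $N(c_i)$ are all unchanged — exactly as in Figure 13, just with the possibility that some arrows become arcs between the two lines.

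For part (2), my plan is to track how constant shifts in the segment labelings propagate to $N(c_i)$. Any two admissible labelings of $l_1$ differ by a global constant $k_1$, and any two admissible labelings of $l_2$ differ by a global constant $k_2$. From the definition in Figure 27, for any arc $c_i:l_1\to l_2$, the integer $N(c_i)$ is (up to a fixed sign/writhe correction) the label at the head on $l_2$ minus the label at the tail on $l_1$, so it changes by $k_2-k_1$; symmetrically, for $c_j:l_2\to l_1$, $N(c_j)$ changes by $k_1-k_2$. Consequently $F_2(t)=t^{\,k_2-k_1}F_1(t)$ while $G_2(t)=t^{\,k_1-k_2}G_1(t)$, so setting $k=k_2-k_1$ yields $G_2(t)=G_1(t)\cdot t^{-k}$ whenever $F_2(t)=F_1(t)\cdot t^{k}$.

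The step I expect to be the main obstacle is the bookkeeping in part (1) for $\Omega_{3a}$, because in the two-component setting the three strands can distribute across $K_1$ and $K_2$ in several inequivalent ways, and the three corresponding chords in the Gauss diagram may be a mixture of self-arrows and inter-circle arcs; in each sub-case one must confirm that the triple $\{w(c_i),\,\text{orientation type},\,N(c_i)\}$ is preserved for every arc, exactly as Theorem 3.3 does in the knot case. The orientation/sign analysis of Figure 27 is delicate but entirely mechanical once one fixes conventions, and after carrying it out the invariance follows by the same template as in the proof of Theorem 3.3.
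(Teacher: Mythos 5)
Your proposal is correct and follows essentially the same route as the paper: invariance is checked on the generating set $\{\Omega_{1a},\Omega_{1b},\Omega_{2a},\Omega_{3a}\}$ with a case split on which components the participating strands belong to (self-crossings contribute nothing to $F$ or $G$, the mixed $\Omega_{2a}$ arcs cancel in pairs, and the $\Omega_{3a}$ sub-cases preserve the triple of writhe, orientation type, and $N$-value), while part (2) is the same observation that a constant shift of the labelings multiplies $F$ and $G$ by opposite powers of $t$. Your treatment of part (2) with two independent shifts $k_1,k_2$ is marginally more explicit than the paper's, but it is the same argument.
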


\begin{proof}
Let us prove the two properties respectively.

\begin{enumerate}
\item As we mentioned above for a fixed coloring of $l_1$ and $l_2$, $F(t)$ and $G(t)$ are both well defined. It suffices to prove that $F(t)$ is preserved under all Reidemeister moves. For $G(t)$ the proof is analogous.

First let us consider the first Reidemeister move $\Omega_{1a}$, see figure 10. Obviously the crossing involved in the Reidemeister move is a self-crossing. Without loss of generality we assume that it is a self-crossing of $K_1$. Then the figure below shows the effect of $\Omega_{1a}$ to the universal cover of the Gauss diagram. It is easy to find that the polynomial $F(t)$ is invariant under $\Omega_{1a}$. Similarly we can prove that the linking polynomial is also preserved under $\Omega_{1b}$.
\begin{center}
\includegraphics{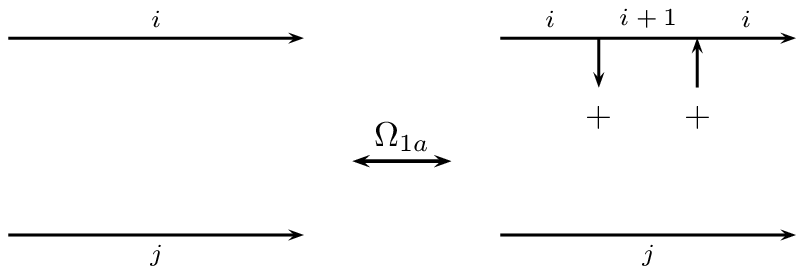}\\
Figure 28
\end{center}

Now we turn to prove that $F(t)$ is invariant under $\Omega_{2a}$. There are two possibilities for the two crossing points involved in the Reidemeister move: both two crossings are self-crossings, or both of them are not self-crossings. Let us consider the first case. Without loss of generality we suppose that these two crossings are two self-crossings of $K_1$, see the figure below. It is easy to observe that $F(t)$ is invariant in this case.
\begin{center}
\includegraphics{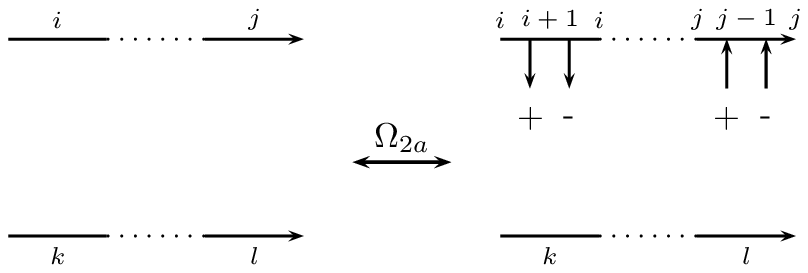}\\
Figure 29
\end{center}
Next we consider the second case, i.e. the two crossing points are intersected by $K_1$ and $K_2$. In this case, the effect of $\Omega_{2a}$ on the universal cover is given as below. Note that the lift of each arc is arbitrary, hence we can choose the lift as below.
\begin{center}
\includegraphics{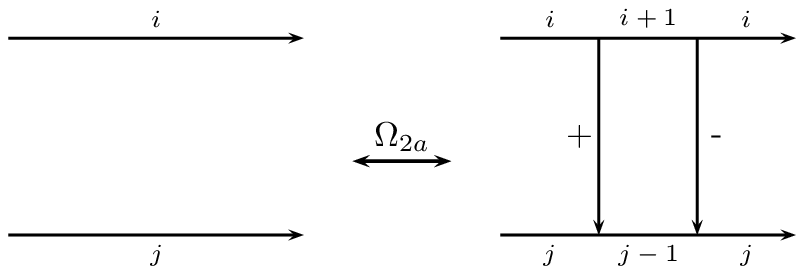}\\
Figure 30
\end{center}
According to the figure above, the Reidemeister move $\Omega_{2a}$ will increase or decrease a pair of arcs oriented from $l_1$ to $l_2$, say $c_1$ and $c_2$. It can be read from the figure that $N(c_1)=N(c_2)=j-i-1$ and $w(c_1)w(c_2)=-1$. Hence their contribution to $F(t)$ is $t^{j-i-1}-t^{j-i-1}=0$. It follows that $F(t)$ is invariant under $\Omega_{2a}$.

Finally we need to prove that $F(t)$ is invariant under the third Reidemeister move $\Omega_{3a}$. If all the three crossings involved in $\Omega_{3a}$ are self-crossing points, then for each crossing point $c\in K_1\cap K_2$ the monomial $w(c)t^{N(c)}$ is kept, hence $F(t)$ and $G(t)$ are both invariant under $\Omega_{3a}$. So it is sufficient to discuss the case that not all the three crossings are self-crossing points. We use $a, b, c$ to denote the three curves involved in the $\Omega_{3a}$. Our discussion divides into three cases.
\begin{center}
\includegraphics{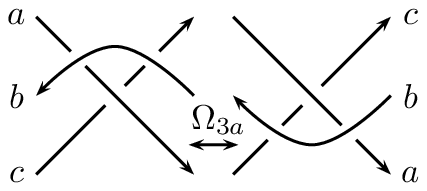}\\
Figure 31
\end{center}
\begin{itemize}
\item Case 1: $a, b\in K_2$ and $c\in K_1$. The figure below gives the associated transformation on the universal cover of the Gauss diagram. It is evident that for each crossing $c_i\in K_1\cap K_2$, the sign $w(c_i)$ and the assigned integer $N(c_i)$ are both invariant. For example the contribution of the two crossing points between $K_1$ and $K_2$ involved in the Reidemeister move is $t^{i-j}-t^{i-k}$ on both sides. Hence $F(t)$ and $G(t)$ are both invariant.
\begin{center}
\includegraphics{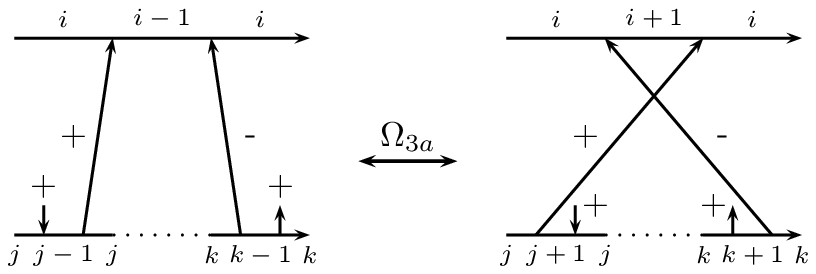}\\
Figure 32
\end{center}
\item Case 2: $a, c\in K_2$ and $b\in K_1$. Similarly for each crossing $c_i\in K_1\cap K_2$, $w(c_i)$ and $N(c_i)$ are also invariant in this case. It follows that $F(t)$ and $G(t)$ are preserved.
\begin{center}
\includegraphics{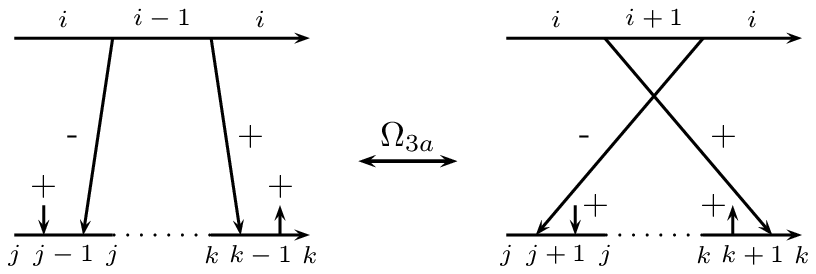}\\
Figure 33
\end{center}
\item Case 3: $b, c\in K_2$ and $a\in K_1$. In this case the discussion is almost the same, both $F(t)$ and $G(t)$ are kept under $\Omega_{3a}$.
\begin{center}
\includegraphics{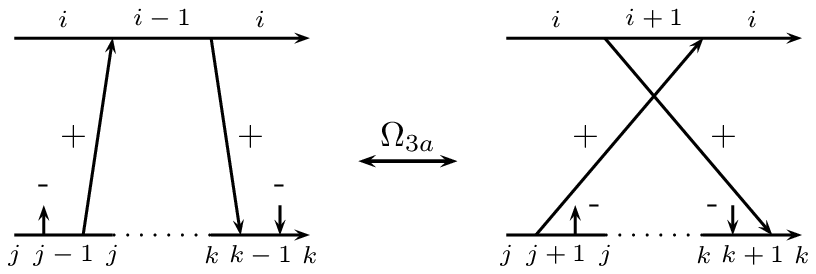}\\
Figure 34
\end{center}
\end{itemize}
In conclusion we have proven that $F(t)$ and $G(t)$ are both invariant under all Reidemeister moves for a fixed coloring of $l_1$ and $l_2$.

\item The proof of the second property is evident according to the definition of $N(c_i)$. For example if we replace the original labeling integer $p$ on $l_1$ by $p-k$, then $F(t)$ will be multiplied by $t^k$ and $G(t)$ will be multiplied by $t^{-k}$. The result follows.
\end{enumerate}
\end{proof}

The theorem above suggests us to define the linking polynomial $L_{K_1\cup K_2}(t)$ of a 2-component virtual link $L=K_1\cup K_2$ to be
\begin{center}
$L_{K_1\cup K_2}(t)=F(t)G(t)$.
\end{center}

The theorem below follows directly from Theorem 5.3.

\begin{theorem}
The linking polynomial $L_{K_1\cup K_2}(t)$ is a well defined virtual link invariant.
\end{theorem}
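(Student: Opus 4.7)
The plan is to deduce Theorem 5.4 as an essentially immediate formal consequence of Theorem 5.3, since the genuinely nontrivial casework has already been done in that theorem. There are two items to verify: that the product $F(t)G(t)$ is independent of the chosen labelings of the universal covers $l_1$ and $l_2$, and that it is preserved under every generalized Reidemeister move.

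For independence of the labeling, I would invoke part (2) of Theorem 5.3. Any two admissible labelings of $\{l_1, l_2\}$ differ by adding constant integers to the labels on $l_1$ and on $l_2$; a brief check of the definition of $N(c_i)$ shows that shifts on $l_1$ alone and shifts on $l_2$ alone both produce changes in $F(t)$ and $G(t)$ of the form $(t^k, t^{-k})$ with the same exponent sign convention, and this is exactly what Theorem 5.3(2) packages. Consequently, for any two labelings producing pairs $(F_1, G_1)$ and $(F_2, G_2)$,
\[
F_2(t)G_2(t) \;=\; (F_1(t)\cdot t^k)(G_1(t)\cdot t^{-k}) \;=\; F_1(t)G_1(t),
\]
so the product depends only on the diagram $K_1 \cup K_2$ and not on auxiliary choices.

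For invariance under Reidemeister moves, I would fix any admissible labeling of $l_1, l_2$. By part (1) of Theorem 5.3, each of $F(t)$ and $G(t)$ is individually unchanged under the generating oriented Reidemeister moves $\Omega_{1a}, \Omega_{1b}, \Omega_{2a}, \Omega_{3a}$, and hence (by the theorem of Polyak invoked earlier in the paper) under all oriented Reidemeister moves and the detour moves. The product of two invariant Laurent polynomials is automatically invariant, so $L_{K_1 \cup K_2}(t) = F(t)G(t)$ is preserved.

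There is really no serious obstacle here, because all the casework on crossings, parity, arc lifts, and the three subcases of $\Omega_{3a}$ has been absorbed into the proof of Theorem 5.3. The only care required is the observation above that shifts on $l_1$ and on $l_2$ both produce cancelling factors in the product; once this is noted, Theorem 5.4 is a formal corollary of Theorem 5.3.
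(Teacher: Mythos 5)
Your proposal is correct and matches the paper exactly: the paper states that Theorem 5.4 ``follows directly from Theorem 5.3,'' and your argument simply spells out that deduction, using part (2) to show the product $F(t)G(t)$ is independent of the labeling (the factors $t^{k}$ and $t^{-k}$ cancel) and part (1) for invariance under the generating Reidemeister moves.
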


\textbf{Remark} In the definition of the linking polynomial we only consider the non-self-crossings of $L$, so that we can give a generalization of the linking number. It is natural to consider all the self-crossings with nontrivial indexes simultaneously, some similar polynomial invariant can be defined like the writhe polynomial. Of course this is just the combination of some ``knotted" information of each component and the ``intertwined" information of the link, hence it contains nothing new essentially.

From the definition of the linking polynomial one can find an apparent disadvantage of it. In fact there are virtual links with nontrivial linking number but trivial linking polynomial. For example for a link $L=K_1\cup K_2$, if at each crossing $c\in K_1\cap K_2$ we have $K_1$ lies over $K_2$, then obviously $G(t)=0$, hence one obtains $L_{K_1\cup K_2}(t)=F(t)G(t)=0$, while the linking number of $L$ may be nontrivial. In this case we can reconsider the pair of polynomials $F(t)$ and $G(t)$. It follows from Theorem 5.3 that $F(t)$ and $G(t)$ are both virtual link invariant up to multiplication by $t^{\pm k}$. From this perspective $F(t)$ and $G(t)$ are more powerful than the linking number and span, recall that
\begin{center}
$F(1)+G(1)=2lk(L)$, $|F(1)-G(1)|=span(L)$.
\end{center}
We can read more information about the minimal number of non-self-crossings from this pair of polynomials. Let $c_n(L)$ denote the minimal number of the non-self-crossings of a 2-component virtual link $L$, and $s(L)$ denote the sum of the absolute values of the coefficients of $F(t)$ and $G(t)$, i.e.
\begin{center}
$s=\sum\limits_{i}|a_i|+\sum\limits_{j}|b_j|$, if $F(t)=\sum\limits_{i}a_it^i$ and $G(t)=\sum\limits_{j}b_jt^j$.
\end{center}
The following proposition is evident, which is analogous to Proposition 4.3.
\begin{proposition}
Given a 2-component virtual link $L$, we have $s(L)\leq c_n(L)$.
\end{proposition}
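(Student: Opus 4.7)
The plan is a direct counting argument that mimics the proof of Proposition 4.3. I would start by fixing a virtual link diagram $D$ of $L=K_1\cup K_2$ that realizes the minimum, so that $D$ has exactly $c_n(L)$ crossings in $K_1\cap K_2$. The self-crossings of $D$ play no role here, since by construction neither $F(t)$ nor $G(t)$ sees them: in
\[
F(t)=\sum_{c_i:\ l_1\to l_2}w(c_i)t^{N(c_i)},\qquad G(t)=\sum_{c_j:\ l_2\to l_1}w(c_j)t^{N(c_j)},
\]
only arcs between $l_1$ and $l_2$ in the universal cover of the Gauss diagram appear.

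The key observation is that each non-self-crossing $c$ of $D$ lifts to a single arc that either runs from $l_1$ to $l_2$ or from $l_2$ to $l_1$, and therefore contributes exactly one monomial $w(c)t^{N(c)}$ with coefficient $w(c)=\pm 1$ to exactly one of $F(t)$ or $G(t)$. Before collecting like terms, the total multiset of monomials appearing in $F(t)$ together with $G(t)$ therefore has cardinality exactly $c_n(L)$, each monomial of coefficient $\pm 1$. After gathering terms in each polynomial, the triangle inequality shows that the sum $\sum_i|a_i|+\sum_j|b_j|$ of absolute values of coefficients can only decrease, so computing from the diagram $D$ gives a value $\leq c_n(L)$.

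Finally, I need to confirm that this quantity really depends only on the equivalence class of $L$. This follows from Theorem 5.3: part (2) says that any change of initial integer assignments on $l_1$ and $l_2$ multiplies $F(t)$ by $t^k$ and $G(t)$ by $t^{-k}$ for some $k$, and such a shift of exponents leaves the absolute values of the coefficients unchanged; part (1) gives Reidemeister invariance of $F(t)$ and $G(t)$ for a fixed choice. Together these imply that $s(L)$ is a well defined virtual link invariant, so the bound $s(L)\leq c_n(L)$ obtained from $D$ holds for the invariant itself. There is no substantive obstacle here: the only subtle point is verifying well-definedness of $s(L)$, and that is immediate from Theorem 5.3(2); the rest is essentially tautological from the definitions of $F$ and $G$.
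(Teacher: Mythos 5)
Your argument is correct and is essentially the paper's own (the paper declares the proposition evident by analogy with Proposition 4.3, whose proof is exactly your counting observation: each non-self-crossing contributes at most one to the sum of absolute values of the coefficients of $F(t)$ and $G(t)$). Your added remarks on the triangle inequality and on well-definedness of $s(L)$ via Theorem 5.3(2) just make explicit what the paper leaves implicit.
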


The figure below gives an interesting and illuminating example.
\begin{center}
\includegraphics{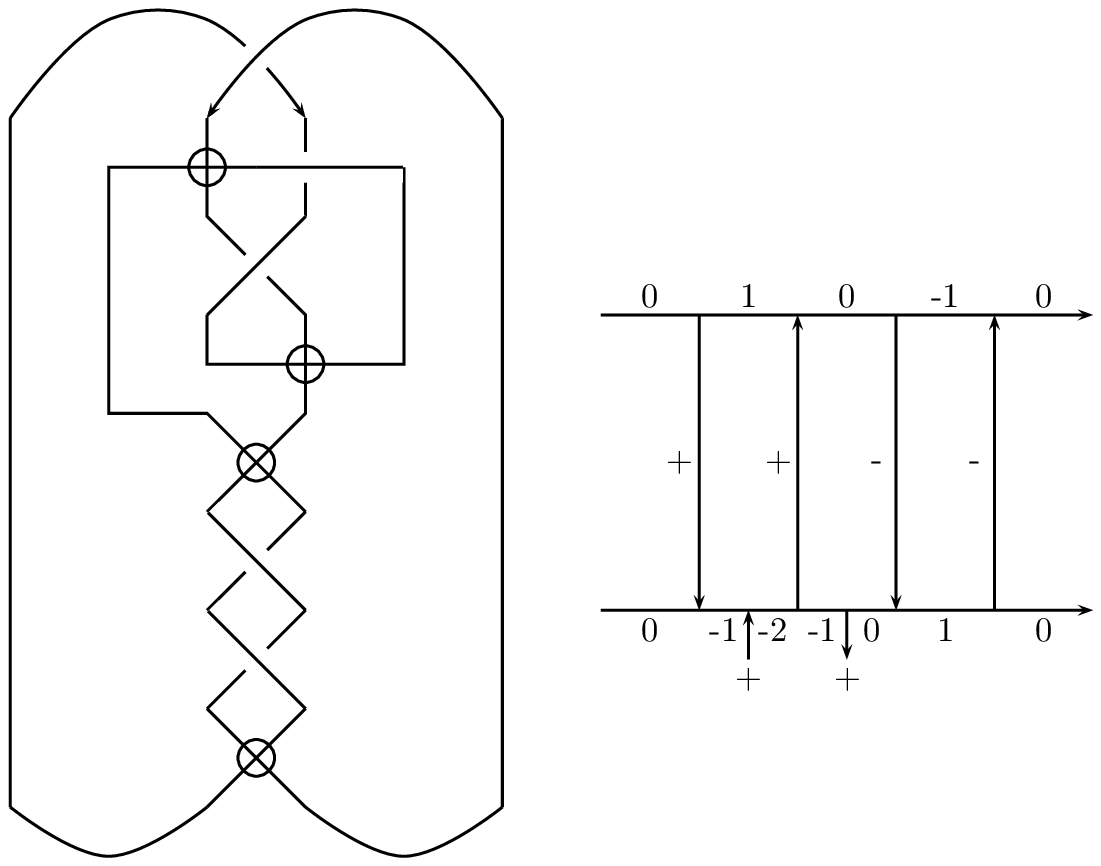}\\
Figure 35
\end{center}
Direct calculation shows that $L_{K_1\cup K_2}(t)=F(t)G(t)=(t^{-1}-t)(t^2-t^{-1})$. However the linking number and the span of $L$ are both trivial in this example. Hence the linking polynomial $L_{K_1\cup K_2}(t)$ $($or the pair $\{F(t), G(t)\})$ really contains more information than the linking number and the span. Moreover we can also conclude that the minimal number of non-self-crossings is exactly 4 in this case, according to Proposition 5.5.

Just like the writhe polynomial is trivial for classical knots, the linking polynomial of a classical link also can not offer any more information than the linking number. In fact we have the following proposition.
\begin{proposition}
Given a classical 2-component link $L=K_1\cup K_2$, we have $L_{K_1\cup K_2}(t)=(lk(L))^2$.
\end{proposition}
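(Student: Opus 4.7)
Since $L$ is classical, Lemma 5.1 gives $span(L) = 0$, so the integer labelings of the universal covers $l_1, l_2$ introduced before Figure 26 are already well defined in $\mathbb{Z}$, with no reduction modulo $span(L)$. From the two identities recorded just before Theorem 5.3, namely $F(1) + G(1) = 2\,lk(L)$ and $|F(1)-G(1)| = span(L) = 0$, we immediately read off $F(1) = G(1) = lk(L)$.

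The heart of the proof is to upgrade these values at $t=1$ to the polynomial identities $F(t) = G(t) = lk(L)$; granting this, the conclusion $L_{K_1 \cup K_2}(t) = F(t)G(t) = (lk(L))^2$ is automatic. My plan is to pick a classical (i.e.\ virtual-crossing-free) planar diagram of $L$, which I may do because $L_{K_1 \cup K_2}(t)$ is a link invariant by Theorem 5.4, and to exhibit a labeling in which $N(c_i) = 0$ for every arc $c_i$ connecting $l_1$ and $l_2$.

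For such a classical diagram one has a Kauffman-style integer labeling of the connected regions of $\mathbb{R}^2 \setminus (K_1 \cup K_2)$ obeying the rule of Figure 14 at every crossing; this is the planar counterpart of Proposition 5.2 and its existence is the diagrammatic manifestation of $span(L)=0$. Reading the $l_1$-label of each arc of $K_1$ as the region label on a fixed side of the arc (say the right of the direction of travel) and doing the same on $l_2$ yields labelings compatible with the update rule of Figure 26 at every arrowhead and arrowtail met on $l_1$ or on $l_2$, whether they come from self-arrows or from arcs connecting the two circles. At any crossing $c_i \in K_1 \cap K_2$, the $l_1$-label and the $l_2$-label entering the prescription of Figure 27 are then the labels of the \emph{very same} region of the plane complement at $c_i$, and hence their difference $N(c_i)$ vanishes.

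Once every $N(c_i)$ is zero, $F(t)$ and $G(t)$ both collapse to constants, $F(t) = r_+ - r_- = lk(L)$ and $G(t) = l_+ - l_- = lk(L)$ (using $span(L)=0$ once more), and the proposition follows. The main obstacle is the bookkeeping check that Figure 27 really selects labels of coinciding regions in each of the four cases, namely $c_i$ oriented $l_1 \to l_2$ or $l_2 \to l_1$ combined with $w(c_i) = \pm 1$; this is a routine sign and orientation analysis, but it is precisely where classicality enters decisively, since for a general virtual link no global region labeling exists and the $l_1$- and $l_2$-labels at a shared crossing are independent, allowing $N(c_i)$ to be nonzero.
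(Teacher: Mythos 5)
Your argument is correct in outline but follows a genuinely different route from the paper's. The paper first observes (via Figure 36) that the linking polynomial is unchanged by switching a self-crossing, hence is a link-homotopy invariant; it then invokes Milnor's classification of two-component classical links up to link homotopy by the linking number, and finishes by computing the linking polynomial of the closure of the braid $(\sigma_1)^{2n}$ directly, obtaining $n^2$. You instead work with a single classical diagram and build the labelings of $l_1$ and $l_2$ simultaneously from the Alexander numbering of the regions of the plane complement, so that every $N(c_i)$ vanishes and $F(t)$, $G(t)$ collapse to the constants $r_+-r_-=l_+-l_-=lk(L)$. This is exactly the mechanism behind Kauffman's proof that the affine index polynomial vanishes on classical knots, and it buys slightly more than the paper states: $F$ and $G$ are individually constant for classical links, not merely their product. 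It is also self-contained, whereas the paper leans on Milnor's homotopy classification plus a representative computation. The cost is that your whole proof rests on the four-case check at Figure 27 that you defer: note that it genuinely requires the correct choice of side and of Alexander convention (left region equal to right region minus one) for the induced labeling to satisfy the update rule of Figure 26 on both the over- and the under-strand, since the raw region numbering changes by $\pm1$ along a strand independently of which strand is on top. Once the convention is fixed the check does close; and even if Figure 27 turned out to carry a uniform offset $k$ on arcs $l_1\to l_2$ and $-k$ on arcs $l_2\to l_1$, the product $F(t)G(t)$ would still equal $(lk(L))^2$, so the conclusion is safe either way. One small imprecision: the region labeling is not literally the planar counterpart of Proposition 5.2 (which concerns arc labelings); it is the classical Alexander numbering, which exists for every planar diagram and induces the arc labeling you need.
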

\begin{proof}
As we mentioned before, the linking number is preserved if one switches a self-crossing of $L$. In fact the linking polynomial also has the same property, namely if one makes a self-crossing change the linking polynomial is invariant. In order to see this we just need to consider the figure below, it is easy to find that the assigned integers on $l_1$ and $l_2$ are not changed under the switching.
\begin{center}
\includegraphics{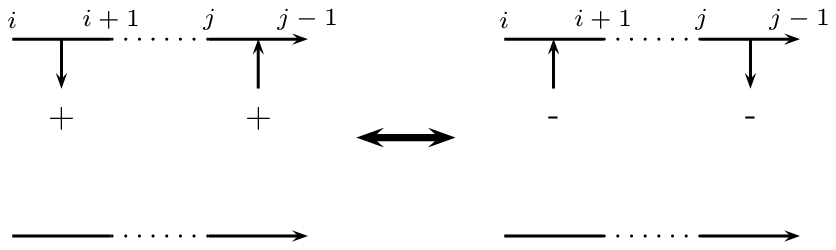}\\
Figure 36
\end{center}
Therefore given a classical link $L$ with linking number $n$, it suffices to consider another link $L'$ which is homotopic to $L$. Here we say a pair of links $L=K_1\cup K_2$ and $L'=K_1'\cup K_2'$ are \emph{homotopic} \cite{Mil1954} if there exists homotopies $f_i(t)$ $(i=1, 2)$ so that $f_i(0)=K_i, f_i(1)=K_i'$ and $f_1(t)\cap f_2(t)=\emptyset$ for each value of $t\in[0, 1]$. Obviously the homotopy classes of a 2-component link are completely determined by the linking number. Hence let us consider the closure of the 2-strand braid $(\sigma_1)^{2n}$, which also has linking number $n$. Direct calculation shows its linking polynomial is $n^2$, which implies the linking polynomial of $L$ is also $n^2$. This finishes the proof.
\end{proof}

We end this section with some simple properties of the linking polynomial. The symbol $\doteq$ means equal up to multiplication by $t^{\pm k}$. The proof is omitted.
\begin{proposition}\textcolor[rgb]{1.00,1.00,1.00}{c}

\begin{enumerate}
  \item Given a link $L$ and its inverse $\overline{L}$ $($the orientations of two components are both reversed$)$, we have $F_L(t)\doteq F_{\overline{L}}(t^{-1})$, $G_L(t)\doteq G_{\overline{L}}(t^{-1})$, and $L_{K_1\cup K_2}=L_{\overline{K_1}\cup \overline{K_2}}(t^{-1})$.
  \item Given a link $L$ and its mirror image $L^*$, we have $F_L(t)\doteq-G_{L^*}(t^{-1})$, $G_L(t)\doteq-F_{L^*}(t^{-1})$, and $L_{K_1\cup K_2}=L_{({K_1\cup K_2})^*}(t^{-1})$.
\end{enumerate}
\end{proposition}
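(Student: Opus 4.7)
The plan is to track, for each non-self-crossing $c_i$ of $L=K_1\cup K_2$, how the three ingredients of the monomial $w(c_i)t^{N(c_i)}$ transform under the two symmetries: (a) the sign $w(c_i)$, (b) the arc-type (first type $l_1\to l_2$ versus second type $l_2\to l_1$), and (c) the exponent $N(c_i)$, which depends on the integer labels on $l_1$ and $l_2$ produced by the rules in Figure 26 and Figure 27. Once I know the action on these three pieces of data I can read off the behavior of $F(t)=\sum_{l_1\to l_2}w(c_i)t^{N(c_i)}$ and $G(t)=\sum_{l_2\to l_1}w(c_i)t^{N(c_i)}$, and then multiply.

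For the inverse $\overline{L}$: reversing both component orientations leaves every crossing sign unchanged (it involves the mutual orientation of two strands) and leaves every Gauss-diagram arrow pointing the same way (over-to-under is orientation-free), so (a) signs and (b) arc-types are preserved. Only the traversal direction of $l_1$ and $l_2$ flips, so walking through the labeling rules in Figure 26 in the opposite direction negates every increment; consequently the labeling function on each real line is replaced by its negative up to a global additive constant. Feeding this into Figure 27 replaces $N(c_i)$ by $-N(c_i)$ up to a constant shift, giving $F_{\overline{L}}(t)\doteq F_L(t^{-1})$ and $G_{\overline{L}}(t)\doteq G_L(t^{-1})$. Multiplying and invoking Theorem 5.3(2) (which pairs the $t^k$ ambiguities of $F$ and $G$ so that their product is rigid) yields $L_{K_1\cup K_2}(t)=L_{\overline{K_1}\cup\overline{K_2}}(t^{-1})$.

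For the mirror image $L^*$: every crossing flips its sign and every Gauss-diagram arrow reverses direction (over and under swap). The sign flip handles (a), and the arrow reversal swaps first-type arcs with second-type arcs, which is exactly what interchanges $F$ and $G$. The decisive observation for (c) is that the labeling rules in Figure 26 group the four cases as \emph{decrease} $=\{(\text{head},+),(\text{tail},-)\}$ and \emph{increase} $=\{(\text{head},-),(\text{tail},+)\}$, and the mirror operation (head$\leftrightarrow$tail together with $+\leftrightarrow -$) acts within each class. Hence the labels on $l_1$ and $l_2$ are unchanged. In Figure 27 the two endpoints of each arc now exchange their over/under roles, and a direct inspection of the figure shows that this swap sends $N(c_i)$ to $-N(c_i)$. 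Combining the sign flip, the arc-type swap, and $N(c_i)\mapsto -N(c_i)$ gives $F_L(t)\doteq -G_{L^*}(t^{-1})$ and $G_L(t)\doteq -F_{L^*}(t^{-1})$; the two minus signs cancel when the two polynomials are multiplied, producing $L_{K_1\cup K_2}(t)=L_{(K_1\cup K_2)^*}(t^{-1})$.

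The main obstacle I expect is verifying the precise transformation law of $N(c_i)$ under mirror image from Figure 27, because $N(c_i)$ is defined via a choice of lift and an assignment of labels to the four segments adjacent to the two endpoints of the arc; one must check that the simultaneous reversal of the arrow and the preservation of the labels really produces $N(c_i)\mapsto -N(c_i)$ rather than some more complicated combination. The inversion case is easier because nothing reverses inside the Gauss diagram, only the walking direction, so the negation of $N(c_i)$ is almost immediate.
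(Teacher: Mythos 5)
The paper omits its own proof of this proposition (it is stated as evident, in parallel with Propositions 2.3 and 3.4), and your strategy --- tracking how the sign $w(c_i)$, the arc type ($l_1\to l_2$ versus $l_2\to l_1$), and the exponent $N(c_i)$ each transform under inversion and under mirror image --- is exactly the intended argument. Your observations that both signs and arrow directions survive inversion while only the walking direction (hence the sign of every labeling increment) flips, and that the mirror operation permutes the four labeling rules of Figure 26 within the ``increase'' and ``decrease'' classes while reversing every arc and flipping every sign, are correct and are the substantive content of the three $\doteq$ statements.

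The one step that does not follow as written is the passage from the two $\doteq$ identities to the \emph{exact} equalities $L_{K_1\cup K_2}(t)=L_{\overline{K_1}\cup\overline{K_2}}(t^{-1})$ and $L_{K_1\cup K_2}(t)=L_{(K_1\cup K_2)^*}(t^{-1})$. Your computation produces $F_{\overline{L}}(t)=F_L(t^{-1})\,t^{a}$ and $G_{\overline{L}}(t)=G_L(t^{-1})\,t^{b}$ for specific integers $a,b$ coming from the constant shifts in $N(c_i)$, so the product acquires a factor $t^{a+b}$; you need $a+b=0$. Theorem 5.3(2) cannot supply this: it constrains how $F$ and $G$ co-vary when you change the coloring of a \emph{fixed} diagram (which is what makes $F(t)G(t)$ well defined), but it says nothing about how the particular discrepancies $t^{a}$ and $t^{b}$ produced by inversion or mirroring are paired. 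Invoking it here is circular, since knowing that $\bigl(F_L(t^{-1}),G_L(t^{-1})\bigr)$ arises from \emph{some} admissible coloring of $\overline{L}$ is precisely the assertion $a+b=0$ in disguise. To close the gap you must read off from Figure 27 that the constant terms in the formulas for $N(c_i)$ on type-one and type-two arcs (the analogues of the ``$+2$'' in Proposition 2.3) shift by opposite amounts under each symmetry, so that the two monomial ambiguities cancel in the product. This is a short explicit check, and you correctly flagged the transformation law of $N(c_i)$ as the delicate point, but as it stands the exact product identities are asserted rather than proved.
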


\textbf{Acknowledge} The authors wish to thank the referees for their useful suggestions and comments. Some part of this paper was written during the first author's visit to Hiroshima University and Osaka City University, he would like to thank Dr. Ayaka Shimizu for her hospitality and the helpful conversations.

\end{document}